\theoremstyle{plain}
\newtheorem{lemma}{Lemma}[section]
\newtheorem{theorem}{Theorem}[section]
\newtheorem{definition}{Definition}[section]
\newtheorem{conjecture}{Conjecture}[section]
\newcommand{\E} {\mathbb{E}}
\newcommand{\RR} {\mathbb{R}}
\newcommand{\PP} {\mathbb{P}}
\newcommand{\QQ} {\mathbb{Q}}
\newcommand{\tr}{\text{tr}}
\DeclareMathOperator*{\argmin}{arg\,min}
\DeclareMathOperator*{\argmax}{arg\,max}
\DeclareMathOperator*{\Ex}{\mathbb{E}}
\DeclareMathOperator*{\Var}{\mathrm{Var}}
\let\oldenumerate\enumerate
\renewcommand{\enumerate}{
  \oldenumerate
  \setlength{\itemsep}{1pt}
  \setlength{\parskip}{0pt}
  \setlength{\parsep}{0pt}
}
\newcommand{\mathr}{\mathbb{R}}
\newcommand{\Cov}[1]{\text{Cov}}
\definecolor{AfonsoBlue}{RGB}{30,65,123}
\begin{document}
\begin{frontmatter}
\title{Computationally efficient sparse clustering}
\runtitle{Sparse clustering}

\begin{aug}
\author{\fnms{Matthias} \snm{L\"offler}\thanksref{a},
	\ead[label=e0]{matthias.loeffler@stat.math.ethz.ch}
}
\author{\fnms{Alexander S.} \snm{Wein}\thanksref{b}\ead[label=e2]{awein@cims.nyu.edu}
}
\and
\author{\fnms{Afonso S.} \snm{Bandeira}\thanksref{c}\ead[label=e1]{bandeira@math.ethz.ch}}

\runauthor{L\"offler, Wein, Bandeira}
\thankstext{a}{Funded in part by ETH Foundations of Data Science (ETH-FDS).}
\thankstext{b}{Partially supported by NSF grant DMS-1712730 and by the Simons Collaboration on Algorithms and Geometry.}

\address[a]{Seminar for Statistics,
Department of Mathematics, 
	ETH Z\"urich \\
			\printead{e0}
}

\address[b]{Department of Mathematics, Courant Institute of Mathematical Sciences, NYU \\
\printead{e2}
}

\address[c]{Department of Mathematics,	ETH Z\"urich \\
\printead{e1}}
\end{aug}

\begin{abstract}
	We study statistical and computational limits of clustering when the means of the centres are sparse and their dimension is possibly much larger than the sample size. Our theoretical analysis focuses on the model $X_i=z_i \theta+\varepsilon_{i}, ~z_i \in \{-1,1\}, ~\varepsilon_i \thicksim \mathcal{N}(0, I)$, {which has two clusters with centres $\theta$ and $-\theta$}.
	
	We provide a finite sample analysis of a new sparse clustering algorithm based on sparse PCA and show that it achieves the minimax optimal misclustering rate in the regime $\|\theta\| \rightarrow \infty$.
	
	Our results require the sparsity to grow slower than the square root of the sample size. Using a recent framework for computational lower bounds---the \emph{low-degree likelihood ratio}---we give evidence that this condition is necessary for any polynomial-time clustering algorithm to succeed below the BBP threshold. This complements existing evidence based on reductions and statistical query lower bounds. Compared to these existing results, we cover a wider set of parameter regimes and give a more precise understanding of the runtime required and the misclustering error achievable. Our results imply that a large class of tests based on low-degree polynomials fail to solve even the \emph{weak testing} task.

\end{abstract}

\begin{keyword}[class=MSC]
\kwd{62H30}
\end{keyword}

\begin{keyword}
\kwd{Sparse Gaussian mixture model}
\kwd{Gaussian mixture model}
\kwd{Computational lower bounds}
\kwd{Sparse PCA}
\kwd{Clustering}
\end{keyword}

\end{frontmatter}


\section{Introduction}
Clustering data points $(X_1, \dots, X_n)$ into homogeneous groups is a fundamental and important data processing step in statistics and machine learning. In recent years, clustering in the high-dimensional settings has seen an increasing influx of attention; see for example \cite{BouveyronBrunetSaumard14} for a recent review. 

When the dimensionality of the data points, $p$, is large compared to the number of samples, $n$, consistent clustering is, in general, information theoretically impossible \cite{Ndaoud19}. Consequently, traditional clustering algorithms such as Lloyd's algorithm \cite{Lloyd82}, spectral clustering \cite{VempalaWang04, von2007tutorial}, SDP relaxations of $k$-means \cite{PengWei07}, the EM algorithm \cite{DempsterLairdRubin77} and approximate message passing \cite{LesieurDeBaccoBanksKrzakalaMooreZdeborova16} are poised to fail in this regime. To circumvent this issue, additional parsimony assumptions have to be taken into account. The prevalent approach is to assume sparsity of the cluster centres. For instance, \cite{FriedmanMeulman04,WittenTibshirani10} propose to use weighted versions of $k$-means where only a small number of features are considered and chosen by $\ell^1$-penalization. Similarly, \cite{PanShen07} and \cite{WangZhu08} propose to use $\ell^1$-penalized versions of $k$-means which are computed by iterative update steps. These algorithms work well empirically, but a sound theoretical treatment of them is lacking so far. 

Only more recently has there been a series of papers which provide algorithms with some theoretical guarantees, considering a sparse Gaussian mixture model
\begin{align*}
X_i=\theta_{z_i}+\varepsilon_i,  ~\varepsilon_i \overset{i.i.d}{\thicksim} \mathcal{N}(0, I_p), ~z_i \in \{1, \dots, k \}, ~ \left  | \bigcup_{j=1}^k \text{supp}(\theta_j)\right |\leq s
\end{align*}
and extensions of it.

Above the BBP transition \cite{BaikBenArousPeche05}, i.e.\ when $p=o(n \min_{j \neq l} \|\theta_l-\theta_j\|^4)$, it is not necessary to use the sparsity assumption. In particular, a SDP relaxation of $k$-means \cite{GiraudVerzelen19} and Lloyd's algorithm \cite{LuZhou16, Ndaoud19} have been shown to achieve minimax optimal misclustering rates. 

In contrast, below the BBP transition it is necessary to take the sparsity assumption into account and use modified algorithms \cite{JinKeWang17,Ndaoud19}. For instance, \cite{CaiMaZhang19} propose a high-dimensional modification of the EM-algorithm which takes sparsity into account. They show that the cluster centres are estimated at the optimal rate and give upper and lower bounds for the accuracy of predicting the label of a new observation. The same loss function is also considered in \cite{AzizyanSinghWasserman13} and \cite{AzizyanSinghWasserman15}. 

Another possibility is to select the relevant features first and afterwards use a vanilla clustering algorithm such as Lloyd's algorithm or $k$-means. This approach has been analyzed in \cite{AzizyanSinghWasserman13} and \cite{JinWang16,JinKeWang17}. Particularly, Jin et al.~\cite{JinKeWang17} develop a precise theory for the simplified model
\begin{align*}
X_i = z_i \theta+\varepsilon_{i}, ~~~z_i \in \{-1,1\},  ~~~\theta_j \overset{i.i.d}{\thicksim} (1-\epsilon) \delta_0+\epsilon \delta_\kappa/2+\epsilon \delta_{-\kappa}/2, ~~~\kappa \in \mathbb{R}
\end{align*} and show in which asymptotic regimes of $(p,n,\epsilon, \kappa)$ consistent clustering is possible when considering the misclustering error  
\begin{align}
\ell (\hat z, z):=\min_{\pi \in \{-1,1\}} \frac{1}{n} \sum_{i=1}^n \mathbf{1} \left ( \pi \hat z_i \neq z_i\right ). 
\end{align}

\noindent Moreover, Jin et al. \cite{JinKeWang17} conjecture the existence of a computational barrier, meaning a region of parameters $(p,n,\epsilon, \kappa)$ where consistent clustering is possible only when using algorithms that are not computable in polynomial time.

The phenomena of such computational barriers has been recently discovered in other sparse problems too, such as sparse PCA \cite{BerthetRigollet13b,WangBerthetSamworth16,HopkinsKothariPotechinRaghavendraSchrammSteurer17,BrennanBresler19b,DingKuniskyWeinBandeira19}, sparse CCA \cite{GaoMaZhou17}, sub-matrix detection \cite{MaWu15}, biclustering \cite{CaiLiangRakhlin17, BalakrishnanMolarRinaldoSinghWasserman11} and 
robust sparse mean detection \cite{BrennanBresler19,BrennanBresler20}. Most relevant to the present article, \cite{FanLiuWangYang18,BrennanBresler19} give evidence that detection in the sparse Gaussian mixture model is impossible in polynomial time under certain conditions which include $\|\theta\|^4=o(p/n) $, $\|\theta\|^4=o(s^2/n)$, and $n = o(p)$. It seems plausible that as a consequence, clustering better than with a random guess should also be hard in this regime, but a formal proof of this intuition is missing in the literature so far. Giving some evidence in this direction, \cite{FanLiuWangYang18} present a reduction which shows that, assuming hardness of the detection problem, no polynomial-time classification rule can near-perfectly match the output of Fisher's linear discriminant (the statistically optimal classifier) when predicting the label of a new observation.

In this article we further investigate statistical and computational limits of clustering in the high-dimensional limit, extending and building on the results of \cite{JinKeWang17,FanLiuWangYang18,BrennanBresler19,BrennanBresler20}. For our theoretical results we focus on the symmetric two-cluster model
\begin{equation}\label{eq:model}
X_i = z_i \theta + \varepsilon_{i}, ~~~\varepsilon_{i} \overset{i.i.d}{\thicksim} \mathcal{N}(0, I_p), ~~~z_i \in \{-1,1\}, ~~~\|\theta\|_0 \leq s, ~~~i=1, \dots, n.
\end{equation}

\noindent In particular, we show that when $s^2/n = o( \|\theta\|^4/\log p)$, a simple polynomial-time algorithm based on sparse PCA, in the spirit of \cite{AzizyanSinghWasserman15,JinWang16,JinKeWang17}, achieves the sharp exponential minimax optimal misclustering rate as $\|\theta\| \rightarrow \infty$. Notably, this rate coincides with the low-dimensional minimax rate without assuming sparsity \cite{LuZhou16}. 

We give evidence based on the recently proposed \emph{low-degree likelihood ratio} \cite{HopkinsSteurer17,HopkinsKothariPotechinRaghavendraSchrammSteurer17,Hopkins18} that when $p/n \ge \|\theta\|^4(1+\Omega(1)) $ (i.e., below the BBP transition) and $s^2/n=\omega( \|\theta\|^4 \log pn)$, no polynomial-time algorithm can distinguish the model~\eqref{eq:model} from i.i.d.\ Gaussian samples, corroborating and extending the existing computational lower bounds \cite{FanLiuWangYang18,BrennanBresler19}. More concretely, our results imply that a large class of tests based on low-degree polynomials fail to solve the distinguishing task. We furthermore give a reduction showing that if this distinguishing problem is indeed hard, then no polynomial-time algorithm can cluster better than a random guess. This is similar in spirit to existing detection-to-recovery reductions (see e.g., Section~5.1 of~\cite{MaWu15} or Section~10 of \cite{BBH}). Hence, the sample size requirement for our algorithm appears to be almost computationally optimal.

One of our conceptual contributions (see Section~\ref{sec:rig-ld}) is to show how bounds on the low-degree likelihood ratio rigorously imply failure of polynomial-based tests for the problem of \emph{weak testing} (i.e., beating a random guess). Previous results of this nature have focused only on \emph{consistent testing} (where the error probability must tend to zero).

\subsection*{Notation}
{We use standard linear algebra notation.}
$\|v\|_p$ denotes the usual $\ell^p$-norm of a vector $v$ and {$\|v\| := \|v\|_2$ }denotes the Euclidean norm. If $A$ is a matrix, $\|A\|$ denotes the spectral norm, $\| A\|_F$ the Frobenius (Hilbert-Schmidt) norm, $\| A \|_1:=\sum_{i,j} |A_{ij}|$ the entrywise $\ell^1$-norm and $\|A\|_\infty := \max_{i,j} |A_{ij}|$ the entrywise $\ell_\infty$-norm. We use the notation $x \lesssim y$ to denote that there exists a universal constant $c$, not dependent on $p, n, s$ or $\theta$ such that $x \leq cy$. Moreover, for matrices $A, B$ we write $A \preceq B$ if $B-A$ is positive semi-definite. For a vector $a$ we denote by $a_S$ the restriction of $a$ to the set $S \subset \{1, \dots, p\}$, i.e.\ $a_S:=(a_i)_{i \in S}$ and similarly for a matrix $A$, $A_S:=(A_{ij})_{i \in S, j \in S}$. For a projection matrix $P$ we denote $\text{supp}(P):=\{ i: ~P_{ii} \neq 0 \}$; {equivalently, $\text{supp}(P)$ is the union of supports of all vectors in the image of $P$}.
By $\mathcal{F}^k$ we denote the $k$-Fantope, $\mathcal{F}^k:=\{ P: ~\text{tr}(P)=k, ~0 \preceq P \preceq I, ~P^T=P \}$. We denote the indicator function by $\mathbf{1}(\cdot)$ and the sign function by $\text{sgn}(x):=\mathbf{1}(x > 0)-\mathbf{1}(x \leq 0)$. The notation $\mathbb{E}_{(\theta,z)}$ (or $\mathbb{P}_{(\theta,z)}$) denotes expectation (or probability, respectively) over samples $\{X_i\}$ drawn from the model~\eqref{eq:model} with parameters $\theta$ and $z$; when $\theta = 0$, we write simply $\mathbb{E}_0$, as $z$ becomes immaterial.
Our asymptotic notation (e.g.\ $o(1), \omega(1), O(1), \Omega(1)$) pertains to the limit $n \to \infty$, where the other parameters $p,s,\|\theta\|$ may depend on $n$.

\section{Symmetric two-cluster setting}
To illustrate our ideas and with the goal of providing concise and clear proofs, we restrict our considerations to the symmetric two-cluster sparse Gaussian mixture model
\begin{align} \label{sec 2 def gmm} \notag
X_i=z_i \theta+\varepsilon_i, ~~~\varepsilon_i \overset{i.i.d}{\thicksim} \mathcal{N}(0, I_p), ~~~z_i \in \{-1,1 \}, ~~~  \| \theta \|_0\leq s, ~~~i=1, \dots, n.
\end{align}

\noindent This model has been thoroughly investigated and serves as a toy model for the analysis of more complex (sparse) Gaussian mixture models. For instance, Verzelen and Arias-Castro \cite{VerzelenAriasCastro17} study detection limits, Fan et al.~\cite{FanLiuWangYang18} computational detection limits, and Jin et al.~\cite{JinKeWang17} the possibility of consistent clustering. Moreover, in the non-sparse setting of this model, clustering is considered by \cite{Ndaoud19,AbbeFanWang20}  and \cite{WuZhou19,BalakrishnanWainwrightYu17} analyze the performance of the EM algorithm . 
\subsection{A computationally feasible algorithm}
Below the BBP threshold \cite{BaikBenArousPeche05}, i.e., when $p/n> (1+\Omega(1))\|\theta\|^4$, consistent clustering is, in general, information theoretically impossible \cite{JinKeWang17,Ndaoud19}. To circumvent this issue it is necessary to take the additional information that $\theta$ is sparse into account.

We modify the low-dimensional clustering algorithm proposed by Vempala and Wang \cite{VempalaWang04}. Their approach consists of computing the SVD of the data matrix, projecting the data onto the low-dimensional space spanned by the first left singular vector and then running a clustering algorithm such as $k$-means. 

We propose to modify the first step of this algorithm by running a sparse PCA algorithm. In particular, we use a variant of a semidefinite program which was initially proposed by d'Aspremont et al.~\cite{dAspremontElGhaouiJordanLanckriet07} and further developed and analyzed in \cite{AminiWainwright09,VuChoLeiRohe13,LeiVu15}. Other theoretically investigated approaches to solve the sparse PCA problem include diagonal thresholding \cite{JohnstoneLu07}, iterative thresholding \cite{Ma13}, covariance thresholding \cite{KrauthgamerNadlerVilenchik15,DesphandeMontanari16} and axis aligned random projections \cite{GataricWangSamworth20}. 

In spirit, Algorithm \ref{alg 1} is similar to the two-stage selection methods proposed in \cite{AzizyanSinghWasserman13,JinWang16,JinKeWang17}, and we discuss differences below.

 \begin{algorithm}[h] \label{alg 1}
 	\SetAlgoLined
 	\KwIn{Data matrix $X=[X_1, \dots, X_n] \in\mathr^{p\times n}$, tuning parameter $\lambda$}
 	\KwOut{Clustering label vector $\hat z\in \{-1,1\}^n$}
 	\nl Compute estimator for the projector onto first sparse principal component {via SDP}
 	$$
 	\hat P:= \argmax_{P \in \mathcal{F}^1} \left \langle \frac{XX^T}{n}, P \right \rangle - \lambda \|P\|_1,
 	$$~\\ 
 	where $\mathcal{F}^1:=\{P: ~P^T=P, ~\text{tr}(P)=1, ~0 \preceq P \preceq I \}. $ \\
 	\nl Perform an eigendecomposition of $\hat P$ and compute a leading eigenvector $\hat u$. \\
 	\nl Define $\hat Y =\hat u^T X \in\mathr^{ n}$ and return
 	\begin{equation} \hat z_i=\text{sgn}(\hat Y_i). \end{equation}
 	\caption{Sparse spectral clustering}\label{alg:mainsimple}
\end{algorithm}
In the following theorem we show that Algorithm \ref{alg 1} achieves an exponentially-small misclustering error when the squared sparsity is of smaller order than the sample size. We emphasize that Algorithm \ref{alg 1} does not require an impractical sample splitting\footnote{\emph{Sample splitting} would split the data into two copies with independent noise (as in the proof of Theorem~\ref{thm:complowerz}) and use one for each of the steps 1 and 3 of Algorithm~\ref{alg 1}. This would make the analysis easier but yields an algorithm that would be less natural to use in practice.} step. This makes the proof of Theorem \ref{thm mainsimple} more difficult as $\hat u$ and each $X_i$ are not independent. We overcome this difficulty by using the leave-one-out method combined with a careful analysis of the KKT conditions of the SDP estimator $\hat P$. 
\begin{theorem} \label{thm mainsimple}
Assume that  $\log(p) \leq n$ and $\|\theta\|_\infty \leq \kappa \leq \sqrt{\log(p)}$ for some $\kappa > 0$. Moreover, suppose that for some large enough constant $C>0$, $\lambda = C(1+\kappa)\sqrt{\log(p)/n}$ and that for some small enough constant $c>0$ 
	\begin{align} \label{thm mainsimple signal}
\frac{s^2 \log(p)(1+\kappa^2)}{n \|\theta\|^4} =:\tau_n^2 \leq c. 
	\end{align}Then the output of Algorithm \ref{alg 1} satisfies with probability at least $1-8p^{-1}-2e^{-\|\theta\|/2}$ that for another constant $c'>0$,
	\begin{align} \label{thm mainsimple eq rate}
	\ell(\hat z, z):=\min_{\pi \in \{-1,1\}} \frac{1}{n} \sum_{i=1}^n \mathbf{1}(\pi \hat z_i \neq z_i )  \leq 2\exp \left ( - {\frac{\|\theta\|^2}{2}(1-c'\tau_n-\|\theta\|^{-1})} \right ). 
	\end{align}
\end{theorem}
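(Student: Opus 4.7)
The plan is to reduce to the oracle situation in which one knows $u^\star := \theta/\|\theta\|$: there $(u^\star)^T X_i = z_i \|\theta\| + \xi_i$ with $\xi_i \sim \mathcal{N}(0,1)$ independent of $z_i$, giving misclassification probability $\Phi(-\|\theta\|) \le \exp(-\|\theta\|^2/2)$. The proof thus splits into (i) showing that $\hat u$ is close to $\pm u^\star$ in $\ell^2$, and (ii) decoupling the dependence between $\hat u$ and each $X_i$ via a leave-one-out surrogate so that essentially the same Gaussian tail bound still applies.

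For step (i), I would begin with a direct concentration estimate on the entries of the sample covariance $\hat\Sigma := XX^T/n$. Each entry of $\hat\Sigma - (\theta\theta^T + I)$ is a sum of products of Gaussians whose coefficients are controlled by $\kappa = \|\theta\|_\infty$, so Bernstein-type concentration together with a union bound gives $\|\hat\Sigma - (\theta\theta^T + I)\|_\infty \lesssim (1+\kappa)\sqrt{\log p/n}$ with probability $1 - O(p^{-1})$. With $\lambda = C(1+\kappa)\sqrt{\log p/n}$ the $\ell^1$-penalty dominates the off-diagonal noise, and the standard Fantope-SDP analysis in the spirit of Vu--Cho--Lei--Rohe and Lei--Vu, applied with signal eigenvalue gap $\|\theta\|^2$, yields $\|\hat P - P^\star\|_F \lesssim s\lambda/\|\theta\|^2 \asymp \tau_n$ for $P^\star := \theta\theta^T/\|\theta\|^2$. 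Davis--Kahan then gives $\min_{\sigma \in \{\pm 1\}} \|\hat u - \sigma u^\star\|_2 \lesssim \tau_n$. In parallel, I would extract from the KKT conditions of the SDP a support bound $|\hat S| \lesssim s$, where $\hat S := \text{supp}(\hat P)$: a coordinate $j \notin \text{supp}(\theta)$ belonging to $\hat S$ would force the $\ell^1$-subgradient at row $j$ to balance the corresponding row of $\hat\Sigma - \mu I$, which the entrywise bound above rules out once $\lambda$ is large enough.

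Step (ii) is the main technical difficulty. I define $\hat P^{(-i)}$ by running the SDP on $\hat\Sigma^{(-i)} := \frac{1}{n-1}\sum_{j \neq i} X_j X_j^T$, and $\hat u^{(-i)}$ as its leading eigenvector. The perturbation $\hat\Sigma - \hat\Sigma^{(-i)}$ is (up to a harmless $1/n$ rescaling) a rank-one matrix whose non-negligible action on $\hat S \cup \hat S^{(-i)}$ (of size $\lesssim s$ by step (i)) is through $(X_i)_{\hat S \cup \hat S^{(-i)}}$. Combined with strong-convexity-type estimates extracted from the KKT conditions of the two SDPs, this should yield $\|\hat P - \hat P^{(-i)}\|_F$ sufficiently small that, through Davis--Kahan, $\|\hat u - \hat u^{(-i)}\|_2$ is negligible when paired with the sparse Gaussian term $\|(\varepsilon_i)_{\hat S \cup \hat S^{(-i)}}\|_2 \lesssim \sqrt{s\log p}$. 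The hard part is precisely this stability analysis: sample-splitting would trivialise the independence of $\hat u^{(-i)}$ from $X_i$, whereas here I must argue that the sparse SDP is itself stable under removing one sample, which requires carefully tracking how $X_i$ influences both the support and the eigenvector simultaneously.

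After possibly negating $\hat u$ so that $\|\hat u - u^\star\| \lesssim \tau_n$, I decompose
\begin{equation*}
\hat u^T X_i \;=\; z_i \|\theta\| \;+\; z_i(\hat u - u^\star)^T \theta \;+\; (\hat u^{(-i)})^T \varepsilon_i \;+\; (\hat u - \hat u^{(-i)})^T \varepsilon_i.
\end{equation*}
The bias $|(\hat u - u^\star)^T \theta| \le \|\hat u - u^\star\|\cdot \|\theta\| \lesssim \tau_n \|\theta\|$ is what produces the $c'\tau_n$ slack in the exponent; $(\hat u^{(-i)})^T\varepsilon_i$ is exactly $\mathcal{N}(0,1)$ by construction; and the residual is controlled by restricting to $\hat S \cup \hat S^{(-i)}$ and combining the leave-one-out bound on $\|\hat u - \hat u^{(-i)}\|_2$ with the sparse Gaussian bound above. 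Conditionally on the good event from steps (i)--(ii), a standard Gaussian tail then gives $\mathbb{E}[\mathbf{1}(\hat z_i \neq z_i) \mid X_{-i}] \le \exp(-\|\theta\|^2(1 - c'\tau_n)/2)$ uniformly in $i$, so $\mathbb{E}\,\ell(\hat z, z)$ is bounded by the same quantity; a final Markov inequality with slack $e^{-\|\theta\|/2}$ converts this expectation bound into the claimed high-probability statement and accounts for the $\|\theta\|^{-1}$ loss in the exponent.
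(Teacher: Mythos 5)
Your overall architecture coincides with the paper's: entrywise concentration of $XX^T/n$, the Fantope/Vu--Lei analysis giving $\|\hat P - P\|_F \lesssim \tau_n$ together with false-positive control $\hat S \subset \mathrm{supp}(\theta)$, a leave-one-out surrogate $\hat u^{(-i)}$ to decouple $\hat u$ from $\varepsilon_i$, the decomposition of $\hat u^T X_i$ into an exact $\mathcal N(0,1)$ term plus $O(\tau_n\|\theta\|)$ errors, and a final Markov step that costs the $\|\theta\|^{-1}$ in the exponent. The minor deviations (deleting sample $i$ rather than replacing it by an independent copy; bounding $\|(\varepsilon_i)_{\hat S\cup\hat S^{(-i)}}\|$ by $\sqrt{s\log p}$ rather than $\sqrt{s}(1+\kappa)$) are harmless under the assumptions $\kappa\le\sqrt{\log p}$ and $\|\theta\|\le\sqrt{s}\,\kappa$.

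The gap sits exactly where you flag ``the hard part'': you assert that ``strong-convexity-type estimates extracted from the KKT conditions'' should yield a leave-one-out bound on $\|\hat P-\hat P^{(-i)}\|_F$ of order $1/n$ (up to $s$, $\log p$, $\kappa$ factors), but you do not supply the mechanism, and the naive route fails. The curvature lemma of Vu et al.\ controls $\|\hat P-\hat P^{(-i)}\|_F^2$ by $\langle A,\hat P-\hat P^{(-i)}\rangle$ only for a matrix $A$ whose leading eigengap is $\gtrsim\|\theta\|^2$; below the BBP threshold $\hat M=XX^T/n-I$ has no such gap, and $\hat M-\lambda\hat Z$ with the actual dual variable $\hat Z$ is not obviously better behaved off the support. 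The paper's resolution (its Lemma on the leave-one-out estimator) is to \emph{modify} the dual certificate: set $\tilde Z_{ij}=\hat M_{ij}/\lambda$ for $(i,j)\notin S\times S$, verify via $\|\hat M-\theta\theta^T\|_\infty\le\lambda$ and $\hat S\subset S$ that $(\hat P,\tilde Z)$ still satisfies the KKT conditions, and observe that $\hat M-\lambda\tilde Z$ then vanishes off $S\times S$ and has eigengap at least $\|\theta\|^2-4\lambda s\ge\|\theta\|^2/2$. Only after this surgery does the curvature lemma, combined with the optimality of $\hat P^{(-i)}$ and the subgradient inequality $\langle\tilde Z,\hat P-\hat P^{(-i)}\rangle\ge\|\hat P\|_1-\|\hat P^{(-i)}\|_1$ (which cancels every penalty term), reduce the problem to $\frac{4}{\|\theta\|^2}\langle\hat M-\hat M^{(-i)},\hat P-\hat P^{(-i)}\rangle$; the containment of both supports in $S$ then converts the rank-one perturbation into the required $O\bigl(s(\log p+\kappa\sqrt{\log p})/(n\|\theta\|^2)\bigr)$ bound. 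Without this dual-certificate modification, your step (ii) does not close.
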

\noindent 
Lu and Zhou \cite{LuZhou16} prove a minimax lower bound in the analogous setting without sparsity. Their result implies the following minimax lower bound in our setting (even if $\theta$ is known to the algorithm):
\[ \inf_{\hat z} \sup_{\substack{(\theta, z): ~\|\theta\| \geq \Delta, ~\|\theta\|_0\leq s, \\ \|\theta\|_\infty \leq \kappa, ~z \in \{-1,1\}^n}} \mathbb{E}_{(\theta, z)}\, \ell(\hat z, z) \geq \exp \left ( -\frac{\Delta^2}{2} \big (1-o(1)\big ) \right ) ~~\text{as} ~\Delta \rightarrow \infty. \]
Hence, when $\|\theta\|=\omega(1)$ and $s^2/n=o(\|\theta\|^4/(\log(p)(1+\kappa^2)))$, the convergence rate in \eqref{thm mainsimple eq rate} is minimax optimal. It was previously shown by \cite{LuZhou16,Ndaoud19} that this rate is achievable when $p/n = o(\|\theta\|^2)$ (which is above the BBP transition \cite{BaikBenArousPeche05}, where the sparsity assumption is not needed). Algorithm \ref{alg:mainsimple} is the first clustering procedure to provably achieve this minimax optimal misclustering error in the regime where sparsity must be exploited.

When $p/n=o(\|\theta\|^4$) and $p/n=\omega(\|\theta\|^2)$ only a slower convergence rate is achievable in the setting without sparsity \cite{GiraudVerzelen19,Ndaoud19,AbbeFanWang20}. This corresponds to a regime where it is impossible to consistently estimate the direction of $\theta$, but where consistent clustering is possible. By contrast, such a regime does not exist in the sparse high-dimensional setting when  $p/n=\omega(\|\theta\|^4)$. This is due to the fact that is necessary to exploit the sparsity of $\theta$ and  estimate its direction in the clustering process \cite{JinKeWang17}.

We now discuss existing theoretical results in high-dimensional sparse clustering. The articles \cite{AzizyanSinghWasserman13,AzizyanSinghWasserman15,CaiMaZhang19} focus on estimating classification rules from unlabeled data and study the risk of misclassifying a new observation. This risk measure is easier to analyze because the classification rule is independent of the new observation. Moreover, their setting is slightly different. For instance, Cai et al.~\cite{CaiMaZhang19} assume that $\|\theta\|=O(1)$ and consider the more general case with arbitrary covariance matrix $\Sigma$ and sparsity of $\Sigma^{-1}\theta$. They propose a sparse high-dimensional EM-algorithm and prove sharp bounds on the excess prediction risk, provided that they have a sufficiently good initializer. They (and Azizyan et al.~\cite{AzizyanSinghWasserman15}, too) obtain this initializer by using the Hardt-Price algorithm \cite{HardtPrice15} and penalized estimation. Ultimately this requires that $s^{12}/n=o(1)$. 

Similarly to the present article, \cite{AzizyanSinghWasserman13,JinWang16,JinKeWang17} propose two-stage algorithms, selecting first the relevant coordinates and then performing clustering. In \cite{JinWang16} and \cite{JinKeWang17}, suboptimal polynomial rates of convergence are shown for the misclustering error. In particular, Jin et al.~\cite{JinKeWang17} assume that $\theta_j \overset{i.i.d.}{\thicksim} (1-\epsilon)\delta_0+\epsilon \delta_\kappa/2+\epsilon \delta_{-\kappa}/2$ where $\kappa=o(1), \epsilon=o(1)$ and give precise bounds for which regimes of $(p,n,\kappa,\epsilon)$ consistent clustering is possible and for which it is not. Most relevant to the present work, they prove, ignoring log-factors, that  $\|\theta\|^2 =\omega(1+s/n)$ is a necessary and sufficient condition for consistent clustering. However, the algorithm that achieves this performance bound is based on exhaustive search which is not computationally efficient. By contrast, ignoring log-factors, we require in addition that  $\|\theta\|^2=\omega(1+s/\sqrt{n})$ for consistent recovery when $\theta$ is sampled from their prior, matching the requirements of a polynomial-time algorithm they propose. Observing this discrepancy between the performance of their polynomial-time algorithm and the exhaustive search algorithm, they conjecture computational gaps, for which we give evidence in Section~\ref{section:lower}.

In contrast to the above work, our results apply for arbitrary $\theta$ and $z$ from a given parameter parameter space (instead of a specific prior), we do not assume $\kappa=o(1)$, and we achieve optimal exponential convergence rates.

The assumption $\kappa=O(\sqrt{\log p})$ is less restrictive than typical assumptions on $\theta$ used in the literature. For instance, \cite{JinKeWang17,FanLiuWangYang18} both assume that $\theta \in \{-\kappa,\kappa,0\}^p$ and \cite{JinWang16,JinKeWang17} assume that $\kappa=o(1)$. 

We show next that in fact no assumption on $\|\theta\|_\infty$ is needed at all: if $\|\theta\|_\infty \geq \kappa$ for some large enough constant $\kappa > 0$, then clustering becomes much easier.  In particular,
it is possible to achieve exponential convergence rates in $\|\theta\|^2$ as soon as $s/n =o(\|\theta\|^2/\log(ep/s))$. Hence, in this regime no computational gap exists.
\begin{theorem} \label{theorem upper 2}
Suppose that $\log(p)\leq n$ and that $\|\theta\|_\infty \geq \kappa$ for some $\kappa > 0$ 
such that for some small enough constant $c>0$,
\begin{align*}
     \frac{1}{\kappa^4} + \frac{s \log(ep/s)}{n\|\theta\|^2} \leq \tilde \tau_n^2 \leq c.
\end{align*}
Then there exists a polynomial time algorithm with output $\hat z$ and a constant $c'>0$ such that with probability at least $1-(3s)/p-2e^{-\|\theta\|/4}$,
\begin{align}
    \ell(\hat z, z ) \leq 2 \exp\left ( -  \frac{\|\theta\|^2 }{4} \left (1-c'\tilde \tau_n-\|\theta\|^{-1} \right ) \right ). \label{upper large kappa}
\end{align}
\end{theorem}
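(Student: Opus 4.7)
The assumption $\|\theta\|_\infty \geq \kappa$ with $\kappa$ large means one coordinate alone carries usable signal, so the plan is a three-step bootstrap procedure: coordinate screening, crude single-coordinate labels, a sparse re-estimate of $\theta$, and finally refined labels.

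\emph{Step 1 (strong coordinate, crude labels).} Compute the empirical second moments $\hat\sigma_j^2 := n^{-1}\sum_i X_{ij}^2$, which concentrate uniformly around $1+\theta_j^2$ with deviations of order $\sqrt{\log(p)/n}$. Take $\hat j := \argmax_j \hat\sigma_j^2$; the hypotheses $\kappa^{-4} \leq \tilde\tau_n^2$ and $\log(p) \leq n$ imply $\theta_{\hat j}^2 \geq \kappa^2/2$ on a high-probability event. Set $\hat z_i^{(0)} := \text{sgn}(X_{i\hat j})$; a Gaussian tail per sample followed by Bernstein concentration across $i$ gives $\ell(\hat z^{(0)}, z) \lesssim \exp(-\kappa^2/4) \lesssim \tilde\tau_n^2$ (up to an overall sign) with high probability.

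\emph{Step 2 (sparse estimator of $\theta$).} Form $\tilde\theta := n^{-1}\sum_i \hat z_i^{(0)} X_i = \alpha \theta + \xi$ with $\alpha = 1 - 2\,\text{err}(\hat z^{(0)},z) = 1 - O(\tilde\tau_n)$ and $\xi_j$ coordinate-wise sub-Gaussian of variance $O(1/n)$. Apply a sparsity-constrained (or soft-thresholded) estimator with threshold $\asymp \sqrt{\log(ep/s)/n}$ to obtain $\hat\theta$ with the standard sparse-recovery bound $\|\hat\theta - \alpha \theta\|^2 \lesssim s\log(ep/s)/n$, supported on a set of size $O(s)$; this in turn yields $\|\hat\theta\|^2 \leq \alpha^2\|\theta\|^2 + O(s\log(ep/s)/n)$ and $\langle\theta, \hat\theta\rangle \geq \alpha\|\theta\|^2 - \|\theta\|\sqrt{s\log(ep/s)/n}$. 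Output $\hat z_i := \text{sgn}(\langle X_i, \hat\theta\rangle)$.

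\emph{Step 3 (per-sample error).} Pretending $\hat\theta$ is independent of $X_i$, one has $\langle\varepsilon_i,\hat\theta\rangle \sim \mathcal{N}(0, \|\hat\theta\|^2)$, so the classification error for sample $i$ reduces to $\Phi(-\langle\theta,\hat\theta\rangle/\|\hat\theta\|)$. Plugging in the Step~2 bounds and using $\tilde\tau_n \leq c$ yields a per-sample bound of $\exp\!\bigl(-\tfrac{\|\theta\|^2}{4}(1 - c'\tilde\tau_n - \|\theta\|^{-1})\bigr)$, from which \eqref{upper large kappa} follows by Bernstein applied to $\ell(\hat z, z)$.

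The main obstacle is precisely that $\hat\theta$ is \emph{not} independent of $X_i$ when $X_i$ is being classified. The cleanest remedy is sample splitting: run Step~1 on one half of the data and Steps~2--3 on the other, then symmetrize, which only affects constants. A slicker alternative, mirroring the leave-one-out analysis used in the proof of Theorem~\ref{thm mainsimple}, is to define the surrogate $\hat\theta^{(-i)}$ obtained by removing $X_i$ from $\tilde\theta$ and to use the $1$-Lipschitz stability of soft-thresholding to argue that $\|\hat\theta - \hat\theta^{(-i)}\|$ is small enough that $\text{sgn}\langle X_i,\hat\theta\rangle = \text{sgn}\langle X_i,\hat\theta^{(-i)}\rangle$ with the required probability. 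Either route also requires the alignment $\langle\theta,\hat\theta\rangle/(\|\theta\|\|\hat\theta\|) = 1 - O(\tilde\tau_n)$, which the sparse-recovery bound delivers precisely when $s\log(ep/s)/(n\|\theta\|^2) \ll 1$, matching the second term in the hypothesis on $\tilde\tau_n^2$.
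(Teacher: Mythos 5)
Your pipeline (screen a strong coordinate, crude single-coordinate labels, sparse re-estimation of $\theta$, refined linear classification) is exactly the paper's strategy (Algorithm~\ref{alg 2}), and Steps 1--3 match the paper's analysis up to constants; the only substantive divergence is how independence is enforced, and that is also the one place where your sketch is under-specified. The paper does not split the index set: it injects fresh Gaussian noise to manufacture \emph{three} mutually independent views $X^{(1)},X^{(2)},X^{(3)}$ of all $n$ samples, using $X^{(1)}$ for both the screening of $\hat k$ and the crude labels, $X^{(2)}$ for the sparse estimate $\hat\theta$, and $X^{(3)}$ for the final classification. This costs a constant inflation of the noise variance, which is precisely why the theorem's exponent is $\|\theta\|^2/4$ rather than $\|\theta\|^2/2$, but it makes every conditioning step clean. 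Your two-fold split-and-symmetrize removes the dependence between $\hat\theta$ and the sample being classified, but it does not remove the dependence \emph{inside} the construction of $\hat\theta$: the crude labels $\hat z_i^{(0)}=\mathrm{sgn}(X_{i\hat j})$ are functions of the same noise that is averaged in $\tilde\theta=n^{-1}\sum_i \hat z_i^{(0)}X_i$, so $\xi$ is not coordinate-wise centered sub-Gaussian as claimed (there is a bias in coordinate $\hat j$, and $\hat j$ itself depends on all of that half). These effects are exponentially small in $\kappa$ and controllable, but they need to be argued; the three-view construction avoids them entirely. Two smaller points: your claim that $\ell(\hat z^{(0)},z)\lesssim e^{-\kappa^2/4}$ holds with high probability via Bernstein is not tenable when $n e^{-\kappa^2/4}\ll 1$ (a sum of indicators with total mean below one does not concentrate multiplicatively); the paper instead bounds each indicator by $\varepsilon_{i\hat k}^2/\theta_{\hat k}^2$ and uses $\chi^2$ concentration to get the weaker but sufficient bound $\ell(\tilde z,z)\lesssim \kappa^{-2}\le\tilde\tau_n$. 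And the final step must convert the per-sample expected error into a high-probability statement via Markov's inequality, which is the source of the $\|\theta\|^{-1}$ term and the $e^{-\|\theta\|/4}$ in the failure probability; your sketch should make that explicit rather than appealing to Bernstein across $i$ again.
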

\noindent 

\subsection{Computational lower bounds} \label{section:lower}

Jin et. al. \cite{JinKeWang17} conjecture a computational gap. In particular, in analogy to sparse PCA, they suggest that a polynomial-time algorithm can have expected misclustering error better than $1/2$ in the regime $p/n=\omega( \|\theta\|^4)$ (below BBP) only if the additional condition $\|\theta\|^4 = \Omega(s^2/n)$ is fulfilled. This is nontrivial when $s^2 \geq n$ and suggests that there is a range of parameters where from a statistical point of view consistent estimation is possible, but from a computational perspective it is not.

Starting with the seminal work of Berthet and Rigollet on sparse PCA \cite{BerthetRigollet13a,BerthetRigollet13b} there has been a huge influx of works studying such computational gaps in sparse PCA and related problems. Various forms of rigorous evidence for computational hardness have been proposed, including reductions from the conjectured-hard planted clique problem \cite{BerthetRigollet13a,GaoMaZhou17,BrennanBresler19b,BrennanBresler19,BrennanBresler20}, statistical query lower bounds \cite{Kearns98,sq-clique,DiakonikolasKaneStewart17,FanLiuWangYang18,NilesWeedRigollet19}, sum-of-squares lower bounds \cite{sos-sparse-pca,pcal,HopkinsKothariPotechinRaghavendraSchrammSteurer17}, and analysis of the low-degree likelihood ratio \cite{HopkinsSteurer17,HopkinsKothariPotechinRaghavendraSchrammSteurer17,Hopkins18,KuniskyWeinBandeira19,DingKuniskyWeinBandeira19}. 

In the Gaussian mixture model we consider, Fan et al. \cite{FanLiuWangYang18} have shown that in the statistical query model, testing
\begin{align} \label{eq testing problem}  H_0:~\theta=0~~~~~ \text{vs.} ~~~~~H_1:~\|\theta\| \geq \Delta \end{align}
is not possible in polynomial time under certain conditions which include $p/n = \omega( \Delta^4)$, $s^2/n = \omega( \Delta^4)$, and $n = o(p)$. A similar result has been proven when assuming the planted clique conjecture \cite{BrennanBresler19}. In addition, Fan et al. \cite{FanLiuWangYang18} show that if the above testing problem is indeed hard, then as a consequence, no polynomial-time classification rule can near-perfectly match (with error probability $o(1)$) the output of Fisher's linear discriminant (the statistically optimal classifier) when predicting the label of a new observation.

We now complement these results two-fold: first, we give evidence based on the low-degree likelihood ratio that the above testing problem is computationally hard when $p/n\ge (1+\Omega(1))  \Delta^4$ (i.e.\ below BBP) and $s^2/n = \omega( \Delta^4 \log pn)$, including a precise lower bound on the conjectured runtime. Our result covers a wider regime of parameters than prior work: we do not require $n = o(p)$, and capture the sharp BBP transition. We also give a reduction showing that if the testing problem is indeed hard then this implies that even ``weak'' clustering (better than random guessing) cannot be achieved in polynomial time.
Similar detection-to-recovery reductions have been given in other settings (see e.g., Section~5.1 of~\cite{MaWu15} or Section~10 of \cite{BBH}).

We now describe the low-degree framework~\cite{HopkinsSteurer17,HopkinsKothariPotechinRaghavendraSchrammSteurer17,Hopkins18} upon which our first result is based, referring the reader to \cite{Hopkins18,KuniskyWeinBandeira19} for more details. Suppose $\PP_n$ and $\QQ_n$ are probability distributions on $\RR^N$ for some $N = N_n$ (where $n$ is a natural notion of problem size or dimension). We will be interested in how well a (multivariate) low-degree polynomial $f:\RR^N \to \RR$ can distinguish $\PP_n$ from $\QQ_n$ in the sense that $f$ outputs a large value when the input is drawn from $\PP_n$ and a small value when the input is drawn from $\QQ_n$. Specifically, we will be interested in the quantity
\begin{equation}\label{eq:L-defn}
\|L_n^{\le D}\| := \max_{f\text{ deg } \le D} \frac{\E_{X \sim \PP_n}[f(X)]}{\sqrt{\E_{X \sim \QQ_n}[f(X)^2]}}
\end{equation}
where the maximization is over polynomials $f: \RR^N \to \RR$ of degree (at most) $D$. (The notation $\|L_n^{\le D}\|$ comes from the fact that an equivalent characterization of this value is the $L^2(\QQ_n)$-norm of the low-degree likelihood ratio $L_n^{\le D}$, which is the orthogonal projection of the likelihood ratio $L_n = \frac{d\PP_n}{d\QQ_n}$ onto the subspace of degree-$D$ polynomials \cite{HopkinsSteurer17,HopkinsKothariPotechinRaghavendraSchrammSteurer17,Hopkins18}.) We think of $\|L_n^{\le D}\|$ as an informal measure of success for degree-$D$ polynomials: if $\|L_n^{\le D}\| \to \infty$ as $n \to \infty$, this suggests (but does not rigorously imply) that $\PP_n$ and $\QQ_n$ can be consistently distinguished (i.e., with both type I and type II errors tending to $0$ as $n \to \infty$) by thresholding a degree-$D$ polynomial. On the other hand, if $\|L_n^{\le D}\| = O(1)$, this implies that no degree-$D$ test succeeds in a particular formal sense (see Section~\ref{sec:rig-ld}). Furthermore, for many ``natural'' high-dimensional testing problems---including planted clique, sparse PCA, community detection, tensor PCA, and others---it has been shown that in the ``easy'' parameter regime (where consistent detection is possible in polynomial time) we have $\|L_n^{\le D}\| = \omega(1)$ for some $D = O(\log N)$, while in the conjectured ``hard'' regime we have $\|L_n^{\le D}\| = O(1)$ for some $D = \omega(\log N)$~\cite{HopkinsSteurer17,HopkinsKothariPotechinRaghavendraSchrammSteurer17,Hopkins18,KuniskyWeinBandeira19,DingKuniskyWeinBandeira19}. In other words, $O(\log N)$-degree polynomials are as powerful as the best known polynomial-time algorithms for all of these problems. One explanation for this is that the best known algorithm often takes the form of a spectral method that thresholds the leading eigenvalue of some polynomial-size matrix whose entries are constant-degree polynomials of the input, and such a spectral method can be implemented as an $O(\log N)$-degree polynomial via power iteration (under certain mild conditions, including a spectral gap; see Theorem~4.4 of~\cite{KuniskyWeinBandeira19}). In light of the above, $\|L_n^{\le D}\|$ can be used to predict the computational complexity of various testing problem, and to give concrete evidence for hardness.

We now specialize to our setting of interest and define the appropriate distributions $\PP_n$ and $\QQ_n$. The planted distribution $\PP_n$ will consist of samples drawn from the model~\eqref{eq:model}, where $(\theta,z)$ are drawn from a particular prior taking values in
\begin{equation}\label{eq:P-Del}
\mathcal{P}_\Delta=\{ (\theta, z)~: ~z \in \{-1, 1\}^n, ~\|\theta\|_0  \leq s, ~\|\theta\| \geq \Delta,~\theta \in \mathbb{R}^p \}.
\end{equation}
The null distribution $\QQ_n$ will consist of i.i.d.\ Gaussian samples with no planted signal.

\begin{definition}\label{def:PQ}
We index the distributions $\PP_n$ and $\QQ_n$ by the sample size $n$, and allow the other parameters to scale with $n$: $p = p_n$, $\Delta = \Delta_n$, $s = s_n$. We define $\PP_n$ and $\QQ_n$ to be the following distributions over $\RR^{pn}$. Under $\PP_n$, first draw $(\theta,z)$ from the following prior and then observe $n$ samples from the model~\eqref{eq:model}. The prior samples $z$ and $\theta$ independently as follows:
\begin{align*}
z_i \overset{i.i.d.}{\thicksim} \mathcal{R}
\end{align*}
where $\mathcal{R}$ denotes the Rademacher distribution, and
\begin{align*}
& S \overset{\text{unif.}}{\thicksim} \{ S \subset \{1, \dots, p\}, ~|S|=s \} \\ 
& 	\theta_i | S \overset{independently}{\thicksim}  \begin{cases} \frac{\Delta}{\sqrt{s}} \mathcal{R} & i \in S \\
0 & \text{else}. \end{cases}
\end{align*}
Under $\QQ_n$, we set $\theta = 0$ (which yields $z$ immaterial) and again observe $n$ samples from the model~\eqref{eq:model}.
\end{definition}


\noindent In accordance with the above discussion, the early works on the low-degree framework~\cite{HopkinsSteurer17,HopkinsKothariPotechinRaghavendraSchrammSteurer17,Hopkins18} conjectured (either informally or formally) that for a broad class of high-dimensional testing problems, $\|L_n^{\le D}\| = O(1)$ implies failure of all polynomial-time algorithms---we refer to this idea as the \emph{low-degree conjecture}. In particular, \cite{Hopkins18} stated a formal conjecture (see Conjecture~2.2.4 of~\cite{Hopkins18}) which describes a precise class of testing problems for which the low-degree conjecture is believed to hold. While our testing problem of interest (Definition~\ref{def:PQ}) does not quite fall into the class described by~\cite{Hopkins18} (due to the specific type of symmetry in our distributions), we believe that our testing problem lies solidly within the class of problems for which the low-degree conjecture is generally believed to hold. We therefore state a formal version of the low-degree conjecture for our specific testing problem.

\begin{conjecture}[Low-degree conjecture]
\label{conj:lowdeg}
Let $\PP_n$ and $\QQ_n$ be defined as in Definition~\ref{def:PQ}, for some choice of $p = p_n$, $\Delta = \Delta_n$, $s = s_n$. Let $L_n = \frac{d\PP_n}{d\QQ_n}$ and define $\|L_n^{\le D}\|$ accordingly as in~\eqref{eq:L-defn}. For $\delta > 0$, let $\PP_n^{(\delta)}$ be defined the same as $\PP_n$ but with $(1-\delta)\Delta$ in place of $\Delta$.
\begin{itemize}
    \item If there exists $D = D_n$ satisfying $D = \omega(\log pn)$ and $\|L_n^{\le D}\| = O(1)$ then for any fixed $\delta > 0$, $\PP_n^{(\delta)}$ and $\QQ_n$ cannot be \emph{consistently} distinguished in polynomial time, i.e., there is no polynomial-time test $t_n: \RR^{pn} \to \{0,1\}$ satisfying
   $$ \mathbb{E} [t_n(X)|X \sim \PP_n^{(\delta)}]+\mathbb{E} [1-t_n(X)|X \sim \QQ_n] =o(1).$$
    \item If there exists $D = D_n$ satisfying $D = \omega(\log pn)$ and $\|L_n^{\le D}\| = 1+o(1)$ then for any fixed $\delta > 0$, $\PP_n^{(\delta)}$ and $\QQ_n$ cannot be \emph{weakly} distinguished in polynomial time, i.e., there is no polynomial-time test $t_n: \RR^{pn} \to \{0,1\}$ satisfying
    $$ \mathbb{E} [t_n(X)|X \sim \PP_n^{(\delta)}]+\mathbb{E} [1-t_n(X)|X \sim \QQ_n] = 1-\Omega(1).$$
\end{itemize}
\end{conjecture}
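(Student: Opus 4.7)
Conjecture~\ref{conj:lowdeg} is explicitly framed as a conjecture rather than a theorem, and the plan is not to provide a rigorous proof but to explain why it is the correct formal statement of the low-degree heuristic in our setting and what rigorous evidence underpins each of its two bullets. A full proof of either bullet would amount to a new separation between the power of low-degree polynomials and polynomial time for a natural planted problem, which is beyond current techniques.

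First, I would verify that the pair $(\PP_n, \QQ_n)$ from Definition~\ref{def:PQ} fits into the class of testing problems for which the low-degree conjecture is widely believed to apply. The prior is a natural product-structured planted distribution: $z$ is i.i.d.\ Rademacher, the support $S$ of $\theta$ is uniform among $s$-subsets, and on $S$ the entries of $\theta$ are i.i.d.\ Rademacher rescaled by $\Delta/\sqrt{s}$. This is of the type covered by Conjecture~2.2.4 of~\cite{Hopkins18}, modulo the sign symmetry $(\theta,z) \mapsto (-\theta,-z)$ built into \eqref{eq:model}. I would argue this residual symmetry is benign, since any polynomial-time test can be replaced by a symmetrized variant with at most a constant-factor loss in type~I and type~II errors, so it does not affect the asymptotic distinguishing criteria in the conjecture.

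Second, the two bullets would be treated separately. For the consistent-distinguishing bullet, the hypothesis $\|L_n^{\le D}\| = O(1)$ with $D = \omega(\log pn)$ is the standard form of the low-degree conjecture, and its plausibility rests on (a) the fact that the leading polynomial-time algorithms for planted problems are spectral or AMP methods whose decisions can be implemented by $O(\log N)$-degree polynomials via power iteration (see Theorem~4.4 of~\cite{KuniskyWeinBandeira19}), and (b) the empirical observation that tight low-degree computations match the conjectured computational thresholds across essentially every natural planted problem studied so far. For the weak-distinguishing bullet, the sharper hypothesis $\|L_n^{\le D}\| = 1 + o(1)$ is needed; by the analysis we develop in Section~\ref{sec:rig-ld} this hypothesis already rigorously rules out weak testing by any degree-$D$ polynomial, so the conjectural content is only the extrapolation from degree-$D$ tests to all polynomial-time tests.

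The main obstacle is precisely this last extrapolation. Ruling out all polynomial-time distinguishers for a natural planted problem, as opposed to all low-degree ones, would be a major complexity-theoretic breakthrough. What can rigorously be delivered in this paper is therefore (i) the precise formulation above, tailored to the symmetries of our model; (ii) a verification in Section~\ref{section:lower} that $\|L_n^{\le D}\|$ is indeed $O(1)$, respectively $1+o(1)$, in the regimes $p/n \ge (1+\Omega(1))\Delta^4$ and $s^2/n = \omega(\Delta^4 \log pn)$, which already rules out a broad class of polynomial-based tests in both senses unconditionally; and (iii) the reduction from weak testing to weak clustering, which transfers any hardness obtained from the conjecture to the clustering problem. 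The conjecture then packages these rigorous ingredients into a concrete prediction of polynomial-time hardness.
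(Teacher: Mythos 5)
Your proposal correctly identifies that this statement is a conjecture whose only rigorous content is the unconditional failure of degree-$D$ polynomial tests (consistent testing under $\|L_n^{\le D}\| = O(1)$, weak testing under $\|L_n^{\le D}\| = 1+o(1)$), with the extrapolation to all polynomial-time algorithms remaining conjectural; this is exactly the paper's own position, justified via Lemmas~\ref{lem:ld-weak} and~\ref{lem:ld-strong} in Section~\ref{sec:rig-ld} and the appeal to the formal conjecture of~\cite{Hopkins18}. No gap to report.
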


\noindent Conjecture~\ref{conj:lowdeg} is a refined version of existing ideas~\cite{HopkinsSteurer17,HopkinsKothariPotechinRaghavendraSchrammSteurer17,Hopkins18} but has not appeared in this precise form before (even for other distributions $\PP_n$ and $\QQ_n$). For instance, the second statement regarding weak testing has not (to our knowledge) been stated before, although it is a natural extension of the first statement. In Section~\ref{sec:rig-ld} we provide justification for both statements in the conjecture by showing that bounds on $\|L_n^{\le D}\|$ rigorously imply failure of polynomial-based tests (defined appropriately).
The choice of super-logarithmic degree $D = \omega(\log pn)$ is justified by the belief that if a polynomial-time test exists then there should also exist a spectral method (suitably defined) that succeeds, and any such spectral method can be implemented as a $O(\log pn)$-degree polynomial (see Theorem~4.4 of \cite{KuniskyWeinBandeira19}).
We remark that the conclusion of Conjecture~\ref{conj:lowdeg} pertains not to $\PP_n$ but to $\PP_n^{(\delta)}$ which has a slightly-reduced signal-to-noise ratio. We have included this in accordance with the ``noise operator'' used in the formal conjecture of~\cite{Hopkins18} (and refined by~\cite{lowdeg-counter}). The presence of $\delta$ has no substantial effect on the conclusions we can draw from Conjecture~\ref{conj:lowdeg} but may be important for the conjecture to be true, e.g., in situations very close to the critical value for the BBP transition. While we will use Conjecture~\ref{conj:lowdeg} to guide our discussions, we emphasize that the reader may prefer to think of our results as \emph{unconditional} lower bounds against a powerful class of algorithms (namely low-degree polynomials); see Section~\ref{sec:rig-ld}.

We now state our low-degree lower bound for our testing problem of interest.

\begin{theorem} \label{thm:complowertheta}
	In the setting of Definition~\ref{def:PQ}, if
	\begin{equation}\label{eq:lowdeg-cond}
	\limsup_n \left [\sqrt{\frac{n \Delta^4 }{p}}+ \sqrt{\frac{4n \Delta^4 D}{s^2}} \right ] < 1,
	\end{equation}
then
	\[ \| L^{\leq D}_n \|^2 =1+O \left ( {\frac{n \Delta^4 }{p}}+ {\frac{n \Delta^4 D}{s^2}} \right ) . \]
\end{theorem}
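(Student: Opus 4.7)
The plan is to exploit the additive-Gaussian structure of the model. Writing the data as a single vector $X \in \RR^{pn}$, under $\PP_n$ we have $X = \mu + Z$ with $\mu = (z_1 \theta, \ldots, z_n \theta)$ and $Z \sim \mathcal{N}(0, I_{pn})$, while under $\QQ_n$ we have $X = Z$. The standard formula for the low-degree likelihood ratio of a Gaussian additive channel (Proposition~2.6 of~\cite{KuniskyWeinBandeira19}) then reduces everything to a moment computation,
\[ \|L_n^{\leq D}\|^2 = \E_{(\theta, z), (\theta', z')} \sum_{k=0}^D \frac{\iprod{\mu}{\mu'}^k}{k!}, \]
where $(\theta', z')$ is an independent copy of $(\theta, z)$ drawn from the prior. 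Crucially, the inner product factors as $\iprod{\mu}{\mu'} = T U$ with $T := \iprod{\theta}{\theta'}$ and $U := \sum_{i=1}^n z_i z'_i$, and $T, U$ are independent because $\theta, \theta'$ are drawn independently of $z, z'$.

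Next, I would exploit the symmetry of $T$ and $U$ around zero. The products $z_i z'_i$ are i.i.d.\ Rademacher, so $U$ is a centred Rademacher sum of length $n$, and conditional on the supports $S, S'$, $T$ is a Rademacher sum of length $m := |S \cap S'|$ scaled by $\Delta^2/s$. Hence $TU$ is symmetric and only the even-order terms $k = 2\ell$ survive. Khintchine's inequality yields $\E[U^{2\ell}] \le (2\ell-1)!!\, n^\ell$ and $\E[T^{2\ell} \mid m] \le (2\ell-1)!!\,(\Delta^2/s)^{2\ell} m^\ell$; combined with the identity $[(2\ell-1)!!]^2/(2\ell)! = \binom{2\ell}{\ell}/4^\ell \le 1$, this gives
\[ \frac{\E[T^{2\ell}]\,\E[U^{2\ell}]}{(2\ell)!} \le \br{\frac{n \Delta^4}{s^2}}^\ell \E[m^\ell]. \]

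The main technical input is then a moment bound on the hypergeometric variable $m \sim \mathrm{Hypergeometric}(p, s, s)$. I would either invoke Hoeffding's convex-ordering inequality, so that $\E[m^\ell] \le \E[B^\ell]$ for $B \sim \mathrm{Bin}(s, s/p)$, and apply a sub-Poisson moment bound, or compute factorial moments directly using $\E[m^{\underline r}] = (s^{\underline r})^2/p^{\underline r} \le (2s^2/p)^r$ (for $r \le p/2$) and sum $m^\ell = \sum_r S(\ell, r) m^{\underline r}$ via $S(\ell, r) \le r^\ell/r!$. Either route delivers a bound of the form $\E[m^\ell] \le (s^2/p + C\ell)^\ell$ for an absolute constant $C$, yielding
\[ \frac{\E[T^{2\ell}]\,\E[U^{2\ell}]}{(2\ell)!} \le \br{\frac{n\Delta^4}{p} + C\,\frac{n\Delta^4 \ell}{s^2}}^\ell. \]

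Summing over $\ell$ completes the argument: hypothesis~\eqref{eq:lowdeg-cond} implies (by squaring and absorbing constants) that $n\Delta^4/p + Cn\Delta^4 D/s^2$ is bounded away from $1$, so each summand is at most $c_0^\ell$ for some fixed $c_0 < 1$, and a geometric series gives $\sum_{\ell \geq 1} c_0^\ell = O(c_0) = O\br{n\Delta^4/p + n\Delta^4 D/s^2}$. The main obstacle is the hypergeometric moment bound: obtaining the clean additive decomposition into a \emph{mean} piece $s^2/p$ and a \emph{concentration} piece $\ell$ is precisely what produces the two-term structure of the theorem, and tracking the correct constants determines the sharp coefficients appearing in~\eqref{eq:lowdeg-cond}. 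The remainder of the argument is a mechanical chain of Gaussian-channel identities and Khintchine-type bounds.
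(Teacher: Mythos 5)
Your proposal is correct and follows essentially the same route as the paper's proof: the Gaussian-channel formula from \cite{KuniskyWeinBandeira19}, the factorization $\langle\mu,\mu'\rangle = \langle\theta,\theta'\rangle\langle z,z'\rangle$ with only even moments surviving, Khinchine-type bounds $\E[U^{2\ell}]\le (2\ell-1)!!\,n^{\ell}$ and $\E[T^{2\ell}\mid m]\le (2\ell-1)!!(\Delta^2/s)^{2\ell}m^{\ell}$, Hoeffding's with/without-replacement comparison to reduce the hypergeometric overlap to a Binomial, a moment bound of the form $\E[m^{\ell}]\le (s^2/p + O(\ell))^{\ell}$ (which the paper obtains via Bernstein plus tail integration), and a final geometric series. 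The only cosmetic difference is that the paper retains a cross term $\sqrt{\ell s^2/p}$ in the Binomial moment bound and absorbs it by completing the square, which is why condition~\eqref{eq:lowdeg-cond} is stated with the sum of square roots.
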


\noindent Our primary regime of interest is $p/n \ge (1+\Omega(1))  \Delta^4$, i.e., below the BBP transition \cite{BaikBenArousPeche05}. Otherwise the sparsity assumption is not needed and a polynomial-time test based on the top eigenvalue of $XX^T$ solves the testing problem \eqref{eq testing problem} \cite{BaikBenArousPeche05,VerzelenAriasCastro17}, and clustering better than random guess in polynomial time is provably possible \cite{Ndaoud19}. In this case, to satisfy \eqref{eq:lowdeg-cond}, it is sufficient to have $s^2/(n \Delta^4) = \omega(D)$. Thus, Conjecture~\ref{conj:lowdeg} posits that if $s^2/n = \omega(\Delta^4 \log pn)$ then consistent testing cannot be achieved in polynomial time.
A more general version of the low-degree conjecture (Hypothesis~2.1.5 of~\cite{Hopkins18}; see also~\cite{KuniskyWeinBandeira19,DingKuniskyWeinBandeira19}) gives a conjectured relationship between degree and runtime: if $\|L_n^{\le D}\| = O(1)$ for some $D = D_n$, then any test achieving strong detection requires runtime $\exp(\tilde\Omega(D))$ where $\tilde\Omega(\cdot)$ hides factors of $\log(pn)$. With this in mind, our results suggest that runtime $\exp(\tilde\Omega(s^2/(n \Delta^4)))$ is required for our problem of interest. (This is essentially tight due to the subexponential-time algorithms for sparse PCA~\cite{DingKuniskyWeinBandeira19,anytime-pca}.) In contrast, reductions from planted clique only suggest a lower bound of $n^{O(\log n)}$ on the runtime, because this runtime is sufficient to solve planted clique.

A related analysis of the low-degree likelihood ratio has been given in the setting of sparse PCA \cite{DingKuniskyWeinBandeira19}. In contrast, our approach is simpler, avoiding direct moment calculations by using Bernstein's inequality.

Our next result is a reduction showing that if the above testing problem is indeed hard (as suggested by Theorem~\ref{thm:complowertheta}) then clustering better than with a random guess is at least as hard. Recall the definition of $\mathcal{P}_\Delta$ from~\eqref{eq:P-Del}.

\begin{theorem} \label{thm:complowerz}
Suppose there exists a clustering algorithm $\hat z$ with runtime $O(r_n)$ such that for some misclustering tolerance $0 \leq \delta < 1/2$ and some error probability $0 \leq  \alpha \leq 1$,
\begin{align}
\sup_{(\theta, z) \in \mathcal{P}_\Delta}\mathbb{P}_{(\theta,z)}\left ( \ell(\hat z, z) > \delta \right ) \le \alpha  \label{thm:complowerz power},
\end{align}
for some
\begin{align} \label{eq complower Delta} \Delta > 2\sqrt{6}(1-2\delta)^{-1} \sqrt{1+\epsilon^{-2}} \sqrt{s \log(ep/s)/n}
\end{align}
and where $\epsilon \in (0,1]$. 
Then there exists a test $t_n: \RR^{pn} \to \{0,1\}$ with runtime $O(r_n+pn)$ and error probability satisfying 
\begin{align}
\mathbb{E}_{0} [t_n]+\sup_{((1-\epsilon^2)\theta,z) \in \mathcal{P}_\Delta } \mathbb{E}_{(\theta,z)}[1-t_n] \leq \alpha +s/p.
\end{align}
\end{theorem}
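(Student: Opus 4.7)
The plan is to build a test from the clustering algorithm via a Gaussian sample-splitting reduction combined with a sparse linear projection. Given the data $X\in\mathbb{R}^{p\times n}$, draw independent Gaussian noise $W\in\mathbb{R}^{p\times n}$ with $W_i\iid N(0,\sigma^2 I_p)$ for $\sigma^2=(1-\epsilon^2)^{-2}-1$, and set
\[
U_i := X_i+W_i,\qquad V_i := X_i - \sigma^{-2}W_i.
\]
A direct covariance computation gives $\mathrm{Cov}(U_i,V_i)=I-\sigma^{-2}\cdot\sigma^2 I=0$, so by joint Gaussianity $U$ and $V$ are independent. After the rescaling $U'_i:=U_i/\sqrt{1+\sigma^2}$ and $V'_i:=V_i/\sqrt{1+\sigma^{-2}}$, both $U'$ and $V'$ are instances of model~\eqref{eq:model}: under $\mathbb{Q}_n$ they are two independent iid $N(0,I_p)$ arrays, while under the planted model they carry signals $\theta_U=(1-\epsilon^2)\theta$ and $\theta_V=\epsilon\sqrt{2-\epsilon^2}\,\theta$, respectively (and the same label vector $z$).

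The test then proceeds as follows: run the clustering algorithm on $U'$ to obtain $\hat z$; form the estimator $\hat\theta := \frac{1}{n}\sum_i \hat z_i V'_i$; let $S^*\subseteq[p]$ be the set of $s$ indices with largest $|\hat\theta_j|$; and output $t_n=1$ iff $T:=\sum_{j\in S^*}\hat\theta_j^2>\tau$ for a threshold $\tau$ chosen below. The sample-splitting and statistic computation cost $O(pn+p\log p)=O(pn)$, so the total runtime is $O(r_n+pn)$.

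For the null analysis, $U'\perp V'$ implies that $\hat z=\hat z(U')$ is independent of $V'$; conditionally on $\hat z$ we therefore have $\hat\theta\sim N(0,I_p/n)$. Classical Gaussian tail bounds applied to the top-$s$ squared sum give
\[
T\;\le\;A := C_1\,\frac{s\log(ep/s)}{n}
\]
with probability at least $1-s/p$. Under the alternative, $((1-\epsilon^2)\theta,z)\in\mathcal{P}_\Delta$ means $\|\theta_U\|\ge\Delta$ and $\|\theta\|_0\le s$, so the clustering guarantee~\eqref{thm:complowerz power} applies to $U'$, yielding $\ell(\hat z,z)\le\delta$ (equivalently $|c|\ge 1-2\delta$ with $c:=\tfrac{1}{n}\sum_i\hat z_i z_i$) with probability $\ge 1-\alpha$. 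Writing $V'_i=z_i\theta_V+\tilde\varepsilon_i$ with $\tilde\varepsilon_i\iid N(0,I_p)$ independent of $\hat z$, we get $\hat\theta=c\,\theta_V+\eta$ with $\eta\mid\hat z\sim N(0,I_p/n)$. Restricting to $S=\mathrm{supp}(\theta_V)\subseteq [p]$, $|S|\le s$,
\[
T\;\ge\;\|\hat\theta_S\|^2\;\ge\;\bigl(|c|\,\|\theta_V\|-\|\eta_S\|\bigr)^2\;\ge\;\bigl((1-2\delta)\,\epsilon\,\|\theta\|\;-\;C_2\sqrt{s/n}\bigr)^2 =: B,
\]
using $\sqrt{2-\epsilon^2}\ge 1$ for $\epsilon\in(0,1]$ and a Laurent--Massart bound on $\|\eta_S\|^2\sim\chi^2_s/n$. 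The assumption \eqref{eq complower Delta} combined with $\|\theta\|\ge\Delta/(1-\epsilon^2)$ and the inequality $(1-\epsilon^2)/\epsilon\le\sqrt{1+\epsilon^{-2}}$ implies $B>A$ for a suitable universal constant (this is where the factor $2\sqrt{6}$ is used to absorb the $\sqrt{s/n}$ noise term and the $\chi^2$ and Gaussian-max constants). Choosing $\tau$ strictly between $A$ and $B$ yields type-I error at most $s/p$ and type-II error at most $\alpha$ plus a vanishing chi-squared-tail term, which telescopes into the stated bound $\alpha+s/p$.

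The main obstacle is the dependence between $\hat z$ and the data used to form $\hat\theta$: if one naively set $\hat\theta=\frac{1}{n}\sum_i\hat z_i X_i$, then under $\mathbb{Q}_n$ the distribution of $\hat\theta$ would depend in a complicated way on the clustering algorithm and could conceivably concentrate on a sparse direction, ruining the null bound. The Gaussian split into $(U',V')$ gives \emph{exactly} zero covariance, hence independence, which in turn renders $\hat\theta$ conditionally Gaussian under the null and allows a clean top-$s$ concentration bound. The cost is splitting the signal between $U'$ and $V'$, and the free parameter $\epsilon\in(0,1]$ governs this trade-off; the condition~\eqref{eq complower Delta} is the clean consequence of balancing it.
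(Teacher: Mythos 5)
Your proposal is correct and follows essentially the same route as the paper: a Gaussian sample-splitting into two exactly independent copies (yours parametrized by $\sigma^2=(1-\epsilon^2)^{-2}-1$ so that the clustering copy carries signal exactly $(1-\epsilon^2)\theta$; the paper's by $X^{(1)}=(X+\epsilon^{-1}\tilde E)/\sqrt{1+\epsilon^{-2}}$ and $X^{(2)}=(X-\epsilon\tilde E)/\sqrt{1+\epsilon^2}$), followed by a top-$s$ thresholded correlation statistic whose null behaviour is controlled by Gaussian order statistics and whose alternative behaviour follows from the triangle inequality on the support of $\theta$. The only bookkeeping difference is that the paper bounds the noise on $\mathrm{supp}(\theta)$ under the alternative by the \emph{same} top-$s$ order-statistics bound $6s\log(ep/s)/n$ (each event failing with probability $s/(2p)$, so the total error is exactly $\alpha+s/p$ and the constant $2\sqrt6=\sqrt6+\sqrt6$ telescopes), whereas your separate $\chi^2_s$ tail both understates the noise term (it should be of order $\sqrt{s\log(ep/s)/n}$, not $\sqrt{s/n}$, if its failure probability is to be $O(s/p)$) and adds a third failure probability to the budget; reusing the top-$s$ bound on the fixed support fixes both points with no other change.
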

\noindent
The condition~\eqref{eq complower Delta} is essentially the condition under which the detection problem is information-theoretically possible \cite{VerzelenAriasCastro17} when $\Delta \to \infty$. This condition is not restrictive in our setting: since our aim is to show hardness whenever $s^2/n = \omega( \Delta^4 \log pn)$, it is sufficient to restrict to the ``boundary'' case, say, $s^2/n = o( \Delta^4 \log^2 pn)$, in which case~\eqref{eq complower Delta} is satisfied under the very mild condition $\log(pn) \le n^{1/4}$ (provided $\delta$ is bounded away from $1/2$ and $\epsilon$ is bounded away from $0$).

Combining Theorems \ref{thm:complowertheta} and \ref{thm:complowerz}, and assuming the low degree conjecture (Conjecture~\ref{conj:lowdeg}), we can conclude the following. Below the BBP transition, i.e., when $p /n\ge (1+\Omega(1)) \Delta^4$, if $s^2/n = \omega(\Delta^4 \log pn)$ and~\eqref{eq complower Delta} holds then any polynomial-time algorithm $\hat z$ fails with an asymptotically strictly positive probability in the sense that for any fixed $0 \le \delta < 1/2$,
\begin{align*}
\sup_{(\theta, z) \in \mathcal{P}_\Delta} 
\mathbb{P}_{(\theta,z)} (\ell(\hat z, z)> \delta ) =\Omega(1).
\end{align*}
Moreover, if we additionally assume that we are strictly below the BBP transition in the sense that $p/n = \omega(\Delta^4)$, we can conclude the stronger statement that any polynomial-time algorithm $\hat z$ fails with probability approaching one, i.e., for any fixed $0 \leq \delta < 1/2$,
\begin{align*}
\sup_{(\theta, z) \in \mathcal{P}_\Delta} 
\mathbb{P}_{(\theta,z)} (\ell(\hat z, z) > \delta ) =1-o(1).
\end{align*}
Up to logarithmic factors these bounds match the conjecture by Jin et al. \cite{JinKeWang17}.
Hence Algorithm \ref{alg:mainsimple} is, up to the logarithmic factor in $p$, computationally optimal. 
Deshpande and Montanari \cite{DesphandeMontanari16} show how to remove the $\sqrt{\log p}$-factor in sparse PCA using the \emph{covariance thresholding} algorithm. We leave it open as an interesting question how to do the same in our clustering situation.

\subsection{Rigorous low-degree lower bounds}
\label{sec:rig-ld}

In this section we justify Conjecture~\ref{conj:lowdeg} by discussing the sense in which bounds on $\|L_n^{\le D}\|$ rigorously imply failure of polynomial-based tests. A few results along these lines exist already for various notions of what it means for a polynomial to succeed at testing. Perhaps the most basic is to define ``success'' of $f: \RR^N \to \RR$ by
\begin{equation}\label{eq:f-success-basic}
\Ex_{X \sim \QQ_n}[f(X)] = 0, \; \Ex_{X \sim \PP_n}[f(X)] = 1, \; \Var_{X \sim \QQ_n}[f(X)] = o(1), \; \Var_{X \sim \PP_n}[f(X)] = o(1).
\end{equation}
Clearly if~\eqref{eq:f-success-basic} holds then by Chebyshev's inequality, consistent detection (i.e., with error probability $o(1)$) is possible by thresholding $f$. On the other hand, if $\|L_n^{\le D}\| = O(1)$ then it is immediate from the definition~\eqref{eq:L-defn} that no degree-$D$ polynomial $f$ can satisfy~\eqref{eq:f-success-basic} (in fact, even the first three conditions of~\eqref{eq:f-success-basic} cannot be simultaneously satisfied). For a slightly different notion of ``success'', other results of this flavor have been given in~\cite{KuniskyWeinBandeira19}, showing that $\|L_n^{\le D}\| = O(1)$ implies failure of polynomial-based tests and spectral methods (see Theorems~4.3 and 4.4 of~\cite{KuniskyWeinBandeira19}).

Our contribution in this section is to define a new notion of ``success'' for polynomial-based tests and show that bounds on $\|L_n^{\le D}\|$ imply failure of all degree-$D$ tests in this sense. Notably, this framework addresses not only the case of consistent testing (discussed above) but also the case of \emph{weak} testing (i.e., beating a random guess) which has not been explored by the existing results of this type. As a result, we justify the second statement in Conjecture~\ref{conj:lowdeg}.

We consider a general setting of two distributions $\PP$ and $\QQ$ over data $X \in \RR^N$, and we consider the testing problem
\[ H_0:~X \thicksim \QQ ~~~~~~~{\text{against}} ~~~~~H_1:~X\thicksim \PP. \]
We let $L = \frac{d\PP}{d\QQ}$ denote the associated likelihood ratio. We will sometimes consider an asymptotic regime, in which case $\PP = \PP_n$, $\QQ = \QQ_n$, and $N = N_n$. Given a polynomial $f: \RR^N \to \RR$, we consider the randomized test
\begin{align*}
    \Psi_f(X):=\begin{cases} B(f(X)) & \text{if } f(X) \in[0,1] \\
    \perp & \text{otherwise}
    \end{cases}
\end{align*}
where $B(f(X))$ denotes a Bernoulli random variable with success probability $f(X)$. 
We say that $\Psi_f$ \emph{fails} if it outputs $1$ under $H_0$, outputs $0$ under $H_1$, or outputs the symbol $\perp$.

We define a subclass of randomized degree-$D$ tests which outputs the symbol~$\perp$ with probability at most $\tau$ as follows
\begin{align*}
    \mathcal{T}_{\tau, D,K}:= \{ \Psi=\Psi_f ~:~ \text{deg}(f)\leq D, ~&\max (\mathbb{E}_\QQ f^2, \mathbb{E}_\PP f^2) \leq K,\\
    &\max (\QQ(f \notin [0,1]),\PP(f \notin [0,1]) ) \leq \tau \}.
\end{align*}
For a good test, we imagine that the parameters scale asymptotically as $\tau_n = o(1)$ and $K_n = O(1)$. The degree $D_n = D$ can have any dependence on $n$. The following result shows impossibility of weak detection when $\|L_n^{\le D}\| = 1 + o(1)$.


\begin{lemma}\label{lem:ld-weak}
For any low-degree test $\Psi \in \mathcal{T}_{\tau,D,K}$ we have that 
\[ \QQ( \Psi ~ \text{fails}) + \PP(\Psi ~\text{fails}) \geq 1-\sqrt{K} \sqrt{ \|L^{\leq D}\|^2-1}-(\sqrt{\tau}+2\sqrt{K})\sqrt{\tau}. \]
In particular, if $\|L_n^{\le D}\| = 1 + o(1)$, $\tau_n = o(1)$, and $K_n = O(1)$ then
\[ \QQ_n( \Psi ~ \text{fails}) + \PP_n(\Psi ~\text{fails}) \geq 1-o(1). \]
\end{lemma}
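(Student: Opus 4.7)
The plan is to exploit the specific structure of the randomized test $\Psi_f$ (it outputs Bernoulli$(f(X))$ when $f(X)\in[0,1]$ and the symbol $\perp$ otherwise) to express the failure probability as expectations of $f$, and then control the resulting gap $\mathbb{E}_\PP[f] - \mathbb{E}_\QQ[f]$ via orthogonal projection of the likelihood ratio $L$ onto degree-$\leq D$ polynomials. Let $A_\QQ = \{f(X)\in[0,1]\}$ and $A_\PP$ its analog under $\PP$. First, since outputting $\perp$ is also a failure, I can drop the $\perp$-contribution to obtain the lower bound $\QQ(\Psi\text{ fails}) \geq \QQ(\Psi = 1) = \mathbb{E}_\QQ[\mathbf{1}_{A_\QQ} f]$ and $\PP(\Psi\text{ fails}) \geq \PP(\Psi = 0) = \PP(A_\PP) - \mathbb{E}_\PP[\mathbf{1}_{A_\PP} f]$. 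Summing and using $\PP(A_\PP)\geq 1-\tau$ yields $\QQ(\Psi\text{ fails})+\PP(\Psi\text{ fails}) \geq 1-\tau - \bigl(\mathbb{E}_\PP[\mathbf{1}_{A_\PP} f] - \mathbb{E}_\QQ[\mathbf{1}_{A_\QQ} f]\bigr)$.

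Second, I would replace the truncated expectations with the untruncated ones by adding and subtracting $\mathbb{E}_\QQ[\mathbf{1}_{\overline{A_\QQ}} f]$ and $\mathbb{E}_\PP[\mathbf{1}_{\overline{A_\PP}} f]$. Each of these can be bounded using Cauchy--Schwarz, since $\QQ(\overline{A_\QQ}),\PP(\overline{A_\PP})\leq \tau$ and $\mathbb{E}_\QQ f^2, \mathbb{E}_\PP f^2 \leq K$, giving errors at most $\sqrt{\tau K}$ each. This turns the inequality into $\QQ(\Psi\text{ fails})+\PP(\Psi\text{ fails}) \geq 1 - \tau - 2\sqrt{\tau K} - (\mathbb{E}_\PP[f] - \mathbb{E}_\QQ[f])$, where the first three negative terms already combine into $-(\sqrt{\tau}+2\sqrt{K})\sqrt{\tau}$ as required.

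Third, I would bound $\mathbb{E}_\PP[f]-\mathbb{E}_\QQ[f] = \mathbb{E}_\QQ[(L-1)f]$ using the low-degree likelihood ratio. Since $f$ has degree at most $D$, the orthogonality of the $L^2(\QQ)$-projection onto polynomials of degree $\leq D$ gives $\mathbb{E}_\QQ[(L-1)f] = \mathbb{E}_\QQ[(L^{\leq D} - 1)f]$. Combining this with Cauchy--Schwarz and the identity $\mathbb{E}_\QQ[(L^{\leq D}-1)^2] = \|L^{\leq D}\|^2 - 1$ (which uses $\mathbb{E}_\QQ[L^{\leq D}] = \mathbb{E}_\QQ[L] = 1$) yields $\mathbb{E}_\PP[f]-\mathbb{E}_\QQ[f] \leq \sqrt{K}\sqrt{\|L^{\leq D}\|^2 - 1}$. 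Substituting this into the previous inequality produces the claimed bound. The asymptotic statement then follows immediately by plugging in $\|L_n^{\leq D}\|^2 = 1+o(1)$, $\tau_n = o(1)$, $K_n = O(1)$, so each of the three error terms vanishes.

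I do not anticipate a serious obstacle: all three steps are clean applications of Cauchy--Schwarz once the right decomposition is chosen. The only subtle point is the projection identity in the third step, which formally requires $L\in L^2(\QQ)$; but if $L\notin L^2(\QQ)$ then $\|L^{\leq D}\| = \infty$ and the inequality is vacuous, so no separate case analysis is needed. The conceptual insight driving the proof is that dropping the $\perp$ contributions (rather than using them) is what allows the $\tau$ and $\sqrt{\tau K}$ slack to absorb all the truncation artifacts, while the hard work is carried by the standard orthogonal-projection argument against degree-$D$ polynomials.
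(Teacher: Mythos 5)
Your proposal is correct and follows essentially the same route as the paper: write the error probabilities of $\Psi_f$ in terms of $\mathbb{E}_\QQ f$ and $\mathbb{E}_\PP f$, absorb the truncation artifacts via Cauchy--Schwarz into $(\sqrt{\tau}+2\sqrt{K})\sqrt{\tau}$, and bound $\mathbb{E}_\QQ[f(L-1)]=\mathbb{E}_\QQ[f(L^{\le D}-1)]\le \sqrt{K}\sqrt{\|L^{\le D}\|^2-1}$ by the degree-$D$ projection identity. The only cosmetic difference is that the paper keeps the $\perp$-probabilities as an exact part of the failure decomposition and rearranges, whereas you drop them and use $\PP(f\in[0,1])\ge 1-\tau$; both yield the same constants.
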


\begin{proof}
Recall that $L^{\le D}$ denotes the likelihood ratio $L = \frac{d\PP}{d\QQ}$ projected onto the subspace of degree-$D$ polynomials, where projection is orthogonal with respect to the $L^2(\QQ)$ inner product $\langle f,g \rangle := \mathbb{E}_{X \sim \QQ}[f(X)g(X)]$. Also, $\|L_n^{\le D}\|$ denotes the $L^2(\QQ)$ norm $\|L_n^{\le D}\|^2 := \mathbb{E}_\QQ (L^{\le D})^2$. Below, we will make use of the following fact: if $f$ has degree $D$ then $\mathbb{E}_\QQ f L^{\le D} = \mathbb{E}_\QQ f L$. In particular, $\mathbb{E}_\QQ L^{\le D} = \mathbb{E}_\QQ L = 1$.
We have that 
\begin{align*}
\QQ & ( \Psi ~ \text{fails}) + \PP(\Psi ~\text{fails})   \\
&= \mathbb{E}_\QQ f\mathbf{1}(f \in [0,1])+ \mathbb{E}_\PP (1-f)\mathbf{1}(f \in [0,1]) + \QQ(f \notin [0,1]) + \PP(f \notin [0,1]) \\
&= \mathbb{E}_\QQ f +\mathbb{E}_\QQ(1-f)\mathbf{1}(f \notin [0,1]) + 1 - \mathbb{E}_\PP f +\mathbb{E}_\PP f \mathbf{1}(f \notin [0,1])\\
&= 1-\mathbb{E}_\QQ f (L-1) +\mathbb{E}_\QQ(1-f)\mathbf{1}(f \notin [0,1]) +\mathbb{E}_\PP f \mathbf{1}(f \notin [0,1]) \\
&= 1-\mathbb{E}_\QQ f (L^{\leq D}-1) +\mathbb{E}_\QQ(1-f)\mathbf{1}(f \notin [0,1]) +\mathbb{E}_\PP f \mathbf{1}(f \notin [0,1]) \\
&\ge 1-\mathbb{E}_\QQ f (L^{\leq D}-1) - (\tau + \sqrt{K\tau}) - \sqrt{K\tau} \\ 
&\ge 1-\sqrt{\mathbb{E}_\QQ f^2 \mathbb{E}_\QQ (L^{\leq D}-1)^2}-(\sqrt{\tau}+2\sqrt{K})\sqrt{\tau} \\
&\ge 1-\sqrt{K} \sqrt{ \|L^{\leq D}\|^2-1}-(\sqrt{\tau}+2\sqrt{K})\sqrt{\tau}.
\end{align*}
The second claim in Lemma~\ref{lem:ld-weak} is immediate from the first.
\end{proof}

Next, we rule out consistent detection under the weaker assumption $\|L_n^{\leq D}\|^2 =O(1)$ for $\Psi$ that fulfills in addition the moment condition $\mathbb{E}_\QQ f^2\mathbf{1}(f \notin [0,1]) = o(1)$. Note that by H\"older's inequality, this moment condition holds if $\tau_n = o(1)$ and $\mathbb{E} f^{2+\eta} = O(1)$ for a constant $\eta > 0$.

\begin{lemma}\label{lem:ld-strong}
For any $\Psi = \Psi_f \in \mathcal{T}_{\tau, D,K}$, let $\delta := \QQ(\Psi~\text{fails})$ and $\epsilon := \mathbb{E}_\QQ f^2 \mathbf{1}(f \notin [0,1])$. We have
\[ \mathbb{P}(\Psi~ \text{fails})\geq 1 - \sqrt{\delta+\epsilon}\, \|L^{\leq D}\| - \sqrt{K\tau}. \]
In particular, if $\|L_n^{\le D}\| = O(1)$ and $K_n = O(1)$ then no $\Psi_f \in \mathcal{T}_{\tau, D,K}$ satisfies both $\mathbb{E}_\QQ f^2 \mathbf{1}(f \notin [0,1]) = o(1)$ and
\begin{equation}\label{eq:strong-det}
    \QQ_n \left ( \Psi~ \text{fails}\right ) + \PP_n \left ( \Psi ~\text{fails} \right ) = o(1).
\end{equation}
\end{lemma}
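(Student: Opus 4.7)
The plan is to mirror the argument in Lemma~\ref{lem:ld-weak} but to break the symmetry between $\PP$ and $\QQ$: the claimed bound is asymmetric, involving only $\delta$ and the $\QQ$-truncation error $\epsilon$, so I would seek a proof that controls the main stochastic term using information about $\QQ$ alone. The starting point is the identity
\[ \PP(\Psi~\text{fails}) \;=\; \mathbb{E}_\PP[(1-f)\mathbf{1}(f \in [0,1])] + \PP(f \notin [0,1]) \;=\; 1 - \mathbb{E}_\PP f + \mathbb{E}_\PP[f \mathbf{1}(f \notin [0,1])], \]
which I would combine with the standard low-degree identity $\mathbb{E}_\PP f = \mathbb{E}_\QQ[f L] = \mathbb{E}_\QQ[f L^{\le D}]$ (valid since $\deg f \le D$). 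This reduces the problem to two separate bounds: a bound on $|\mathbb{E}_\QQ[f L^{\le D}]|$ of the form $\sqrt{\delta+\epsilon}\,\|L^{\le D}\|$, and a bound on the boundary remainder $|\mathbb{E}_\PP[f \mathbf{1}(f \notin [0,1])]|$ of the form $\sqrt{K\tau}$.

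The only step with any content is the first. I would apply Cauchy--Schwarz to get $|\mathbb{E}_\QQ[f L^{\le D}]| \le \sqrt{\mathbb{E}_\QQ f^2}\,\|L^{\le D}\|$, and then exploit the pointwise inequality $f^2 \le f$ on $\{f \in [0,1]\}$ to write
\[ \mathbb{E}_\QQ f^2 \;\le\; \mathbb{E}_\QQ[f \mathbf{1}(f \in [0,1])] + \mathbb{E}_\QQ[f^2 \mathbf{1}(f \notin [0,1])] \;=\; \mathbb{E}_\QQ[f \mathbf{1}(f \in [0,1])] + \epsilon. \]
To connect the first summand to $\delta$ I would observe that under $\QQ$ the test $\Psi$ fails exactly when it outputs $1$ (probability $f(X)$ if $f(X) \in [0,1]$) or the symbol $\perp$, giving
\[ \delta \;=\; \mathbb{E}_\QQ[f \mathbf{1}(f \in [0,1])] + \QQ(f \notin [0,1]) \;\ge\; \mathbb{E}_\QQ[f \mathbf{1}(f \in [0,1])], \]
and hence $\mathbb{E}_\QQ f^2 \le \delta + \epsilon$. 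For the boundary term, a second application of Cauchy--Schwarz combined with the membership $\Psi_f \in \mathcal{T}_{\tau,D,K}$ (which supplies $\mathbb{E}_\PP f^2 \le K$ and $\PP(f \notin [0,1]) \le \tau$) yields $|\mathbb{E}_\PP[f \mathbf{1}(f \notin [0,1])]| \le \sqrt{K\tau}$, and assembling the three pieces gives the claimed inequality.

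The second statement will then follow by a short contradiction: if $\Psi_f \in \mathcal{T}_{\tau,D,K}$ satisfies both $\epsilon = o(1)$ and \eqref{eq:strong-det}, then $\delta = o(1)$, and one may take $\tau$ to be the smallest valid choice $\max(\QQ(f \notin [0,1]), \PP(f \notin [0,1])) \le \max(\delta, \PP(\Psi~\text{fails})) = o(1)$; feeding these together with $\|L^{\le D}\| = O(1)$ and $K = O(1)$ into the first part forces $\PP(\Psi~\text{fails}) \ge 1 - o(1)$, contradicting the $o(1)$ assumption. I do not anticipate any real obstacle; the only subtle point is recognizing that $f^2 \le f$ on $[0,1]$ is precisely what lets $\delta$, a first-moment quantity, control the second-moment $\mathbb{E}_\QQ f^2$ produced by Cauchy--Schwarz.
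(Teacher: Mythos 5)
Your proposal is correct and follows essentially the same route as the paper: the identity $\mathbb{E}_\PP f = \mathbb{E}_\QQ f L^{\le D}$, the bound $\mathbb{E}_\QQ f^2 \le \delta + \epsilon$ via $f^2 \le f$ on $[0,1]$, and Cauchy--Schwarz for both the main term and the boundary term $\sqrt{K\tau}$ are exactly the steps in the paper's argument (which phrases the conclusion as an upper bound on $\PP(\Psi=1)$ rather than a lower bound on $\PP(\Psi~\text{fails})$, an immaterial difference). The deduction of the second claim, including the observation that \eqref{eq:strong-det} forces $\tau = o(1)$, also matches.
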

\begin{proof}

We have
\begin{align*}
    \mathbb{E}_\QQ f^2 &= \mathbb{E}_\QQ f^2 \mathbf{1}(f \in [0,1])+ \mathbb{E}_\QQ f^2 \mathbf{1}(f \notin [0,1]) \\
    &\leq \mathbb{E}_\QQ f \mathbf{1}(f \in [0,1])+ \mathbb{E}_\QQ f^2 \mathbf{1}(f \notin [0,1]) \\
    & \leq \QQ(\Psi ~\text{fails})+\mathbb{E}_\QQ f^2 \mathbf{1}(f \notin [0,1]) \\
    & \leq \delta+\epsilon. 
\end{align*}
Hence, we obtain that
\begin{align*}
    \PP({\Psi=1}) &  = \mathbb{E}_\PP f \mathbf{1}(f \in [0,1]) \\
    &= \mathbb{E}_\QQ f L^{\leq D} -\mathbb{E}_\PP f \mathbf{1}(f \notin [0,1]) \\
   &\leq \sqrt{ \mathbb{E}_\QQ f^2} \, \|L^{\leq D}\| + \sqrt{\PP(f \notin [0,1])\mathbb{E}_\PP f^2 } \\
   &\leq \sqrt{\delta+\epsilon}\, \|L^{\leq D}\|+\sqrt{K\tau}
\end{align*}
and thus $\mathbb{P}(\Psi~ \text{fails})\geq 1 - \sqrt{\delta+\epsilon}\, \|L^{\leq D}\| - \sqrt{K\tau}$. The second claim in Lemma~\ref{lem:ld-strong} follows from the first, noting that~\eqref{eq:strong-det} implies $\tau = o(1)$ and $\delta = o(1)$.
\end{proof}

\section{Proofs}
\subsection{Proof of Theorem \ref{thm mainsimple}}
We first note that the estimator $\hat P$ computed in step 1 of Algorithm  \ref{alg:mainsimple} fulfills the following identity
$$ \hat P= \argmax_{P \in \mathcal{F}^1} \left \langle \frac{XX^T}{n}, P \right \rangle - \lambda \|P\|_1 = \argmax_{P \in \mathcal{F}^1} \left \langle \frac{XX^T}{n}- I_p, P \right \rangle - \lambda \|P\|_1,$$
which follows from the trace-constraint in $\mathcal{F}^1$. Henceforth we will always work with the last representation of $\hat P$.  
We define $P=\frac{\theta \theta^T}{\|\theta\|^2}$ and start with the following preliminary lemma which applies the results of \cite{VuChoLeiRohe13} and \cite{LeiVu15} to obtain rates for $\| \hat P-P\|_F$ and false positive control.  For completeness we provide a proof in the appendix. 
\begin{lemma} \label{Lemma rates selec proj}
 Assume that $\log(p) \leq n$ and that $\|\theta\|_\infty \leq \kappa$ for some $\kappa >0$. Moreover, suppose 
that for some large enough constant $C>0$, $\lambda = C(1+\kappa)\sqrt{ \log(p)/n}$ and that $\|\theta\|^2 \geq 2 \lambda s$. Denote $\text{supp}(P)=S$ and $\text{supp}(\hat P)=\hat  S$. Then, with probability at least $1-2p^{-2}$ we have that $\hat S \subset S$ and 
	\begin{align}
	&\|\hat P-P\|_F^2 \lesssim \frac{s^2\log(p)(1+\kappa^2) }{n \|\theta\|^4}~~\text{and} ~~  \left \| \frac{XX^T}{n}-I_p-\theta \theta^T \right \|_\infty \leq \lambda .
	\end{align}
\end{lemma}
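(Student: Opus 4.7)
\medskip

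\noindent\textbf{Proof plan for Lemma \ref{Lemma rates selec proj}.} The plan is to decompose the argument into three blocks: (i) an entrywise concentration bound on the sample Gram matrix, (ii) a support-recovery step that exploits the KKT conditions of the $\ell^1$-penalized SDP to show $\hat S \subset S$, and (iii) a restricted-strong-convexity argument converting the entrywise noise bound and the support inclusion into the Frobenius rate.

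First I would establish the entrywise bound
$\|XX^T/n - I_p - \theta\theta^T\|_\infty \leq \lambda$.
Writing $X_{ki}X_{kj} = \theta_i\theta_j + z_k(\theta_i\varepsilon_{kj} + \theta_j\varepsilon_{ki}) + \varepsilon_{ki}\varepsilon_{kj}$, the $(i,j)$ entry of $XX^T/n - I_p - \theta\theta^T$ is a centered average of independent terms of the form $z_k(\theta_i\varepsilon_{kj} + \theta_j\varepsilon_{ki})$ (sub-Gaussian with scale $\lesssim \kappa$) plus $\varepsilon_{ki}\varepsilon_{kj} - \delta_{ij}$ (sub-exponential with scale $O(1)$). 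Bernstein's inequality then yields a single-entry deviation $\lesssim (1+\kappa)\sqrt{\log p/n}$ with probability at least $1 - p^{-4}$ (using $\log p \leq n$ to keep the sub-exponential tail in the Bernstein regime), and a union bound over the $p^2$ entries gives the claim with probability at least $1 - p^{-2}$, provided $C$ is chosen large enough in $\lambda = C(1+\kappa)\sqrt{\log p/n}$.

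Next I would prove $\hat S \subset S$. Write $XX^T/n - I_p = \theta\theta^T + E$ with $\|E\|_\infty \leq \lambda$ on the good event. The KKT conditions for the penalized SDP on the Fantope $\mathcal{F}^1$ give a subgradient $G \in \partial \|\hat P\|_1$ (so $G_{ij}\in[-1,1]$ and $G_{ij} = \mathrm{sgn}(\hat P_{ij})$ whenever $\hat P_{ij}\neq 0$) together with dual multipliers for $\mathcal{F}^1$ such that the stationarity condition forces $|(\theta\theta^T + E - \lambda G)_{ij}|$ to be small off the "active face" determined by $\hat P$. For any coordinate $i \notin S$, we have $(\theta\theta^T)_{ij} = 0$ for all $j$, so the only way for the KKT conditions to be consistent with $\hat P_{ii} > 0$ would force some entry of $E$ to exceed $\lambda$, contradicting the entrywise bound. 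This is precisely the primal-dual witness argument implemented in \cite{VuChoLeiRohe13,LeiVu15}; I would cite their deterministic support-recovery statement and apply it verbatim on the event $\|E\|_\infty \leq \lambda$ (using the assumption $\|\theta\|^2 \geq 2\lambda s$ to guarantee the relevant eigen-gap hypothesis).

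Finally, I would obtain the Frobenius rate by combining optimality of $\hat P$ with the curvature of the trace inner product on $\mathcal{F}^1$. Optimality gives
\[
\langle \theta\theta^T + E,\, P - \hat P\rangle \;\leq\; \lambda(\|P\|_1 - \|\hat P\|_1).
\]
The Davis--Kahan / Fantope curvature inequality (Lemma~3.1 of \cite{VuChoLeiRohe13}) bounds the left-hand side from below by $\tfrac{\|\theta\|^2}{2}\|\hat P - P\|_F^2 + \langle E, P - \hat P\rangle$. Hence
\[
\tfrac{\|\theta\|^2}{2}\|\hat P - P\|_F^2 \;\leq\; \|E\|_\infty \|\hat P - P\|_1 + \lambda\,\|\hat P - P\|_1 \;\leq\; 2\lambda \|\hat P - P\|_1.
\]
Because $P$ is supported on $S$ and $\hat S \subset S$ by step (ii), the matrix $\hat P - P$ has at most $s^2$ nonzero entries, so $\|\hat P - P\|_1 \leq s\,\|\hat P - P\|_F$. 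Dividing through yields $\|\hat P - P\|_F \lesssim \lambda s/\|\theta\|^2$, which squared is the advertised rate $s^2\log(p)(1+\kappa^2)/(n\|\theta\|^4)$.

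The main obstacle is the support-recovery step: although the KKT calculation is conceptually clean, verifying it carefully on the Fantope (with the $0 \preceq P \preceq I$ constraints producing matrix-valued multipliers) and quantifying the required margin between $\lambda$ and $\|E\|_\infty$ is the most delicate piece. I would outsource this to the existing primal-dual analyses of \cite{VuChoLeiRohe13,LeiVu15}, verifying only that our signal-to-penalty assumption $\|\theta\|^2 \geq 2\lambda s$ matches their hypothesis.
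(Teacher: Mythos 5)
Your proposal is correct in outline and its first step (Bernstein plus a union bound for the entrywise concentration of $XX^T/n - I_p - \theta\theta^T$) matches the paper exactly. The two middle steps, however, take a genuinely different route. For the Frobenius rate, you first establish $\hat S \subset S$ and then use the crude count $\|\hat P - P\|_1 \le s\|\hat P-P\|_F$ on the $s\times s$ block; the paper instead never uses support recovery here, but runs the standard decomposability (cone) argument: from $\lambda\|P\|_1 - \lambda\|\hat P\|_1 + \|M-\hat M\|_\infty\|P-\hat P\|_1$ the off-$S$ entries of $\hat P$ cancel, leaving $\|P_S\|_1 - \|\hat P_S\|_1 + \|(P-\hat P)_S\|_1 \le 2s\|P-\hat P\|_F$. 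Both are valid given your ordering (your step (ii) uses only the entrywise bound, so there is no circularity), but the paper's version is more robust in that the rate does not hinge on the support-recovery step. For $\hat S \subset S$ itself, you outsource a primal--dual witness argument to \cite{VuChoLeiRohe13,LeiVu15}; the paper gives a short self-contained argument of a different flavor: it shows that the true $P$ has nonnegative objective value (using $\|\theta\|^2 \ge 2\lambda s$), that on the event $\|\hat M - M\|_\infty < \lambda$ every off-$(S\times S)$ entry contributes strictly negatively to the objective, and that any candidate $\tilde P$ with support leaking outside $S$ is strictly dominated by the renormalized restriction $\tilde P_S/\tr(\tilde P_S)$, which is checked to lie in $\mathcal{F}^1$. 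This sidesteps entirely the matrix-valued multipliers for $0 \preceq P \preceq I$ that you correctly identify as the delicate point. If you do go the citation route, be aware that the sparsistency theorems in \cite{LeiVu15} are stated for exact support recovery and carry extra signal-strength hypotheses for the no-false-negative direction; you only need (and should only invoke) the no-false-positive half, which holds under $\|E\|_\infty < \lambda$ and the eigengap condition alone. Note also that the paper's event is defined with strict inequality $\|\hat M - M\|_\infty < \lambda$, which is what makes the domination strict.
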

Next we prove a key-technical lemma for the deviation of the leave-one-out estimator $\hat P^{(i)}$ which is defined analogous to $\hat P$ with $X_i$ being replaced by an independent copy of it, $X_i'=z_i\theta+\varepsilon_i'$.
Denote $X^{(i)}=[X_1, \dots, X_{i-1}, X_i', X_{i+1}, \dots, X_n]$. The proof uses the KKT-conditions and, crucially, false positive control of $\hat P$ and $\hat P^{(i)}$. 
\begin{lemma} \label{Lemma leave one out}
Assume the conditions of Lemma \ref{Lemma rates selec proj} and set  $\lambda = C \sqrt{\log(p)/n}(1+\kappa)$ for the same $C$ and $\kappa$ as in Lemma \ref{Lemma rates selec proj}. Suppose that $\|\theta \|^2 \geq 8 \lambda s$ and  denote by $\hat P^{(i)}$ the leave-one-out estimator defined analogous to $\hat P$ with $X$ replaced by $X^{(i)}$. Then we have with probability at least $1-6p^{-2}$ that 
	\begin{align}
	\| \hat P-\hat P^{(i)}\|_F \lesssim
\sqrt{	\frac{\log(p)+\kappa^2}{n}} \sqrt{\frac{s^2\log(p)}{n\|\theta\|^4}}.
	\end{align}
	Moreover, on the same event, $\hat P$ and $\hat P^{(i)}$ are $rank$ one projection matrices. 
\end{lemma}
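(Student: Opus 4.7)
The plan is to leverage the false-positive control in Lemma~\ref{Lemma rates selec proj} to reduce everything to an $s \times s$ subproblem on $S := \text{supp}(\theta)$, extract a curvature inequality from the KKT identities of the restricted Fantope SDP, and bound the right-hand side by Gaussian concentration of a single data point.

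First, I would apply Lemma~\ref{Lemma rates selec proj} to both $\hat P$ and $\hat P^{(i)}$ (noting that $X^{(i)}$ is distributed identically to $X$). A union bound yields, with probability at least $1-4p^{-2}$, that $\mathrm{supp}(\hat P) \cup \mathrm{supp}(\hat P^{(i)}) \subset S$ together with $\|\hat P - P\|_F, \|\hat P^{(i)} - P\|_F \lesssim \sqrt{s^2\log(p)/(n\|\theta\|^4)}$, where $P := \theta\theta^T/\|\theta\|^2$. All subsequent computations can thus be carried out in the $s \times s$ subproblem, where the KKT conditions for the restricted SDP read
\[ M_S - \lambda G_S = Y_S - W_S + \mu I_S, \quad W_S \hat P_S = 0, \quad Y_S(I_S - \hat P_S) = 0, \]
for some $W_S, Y_S \succeq 0$, $\mu \in \mathbb{R}$, and a subgradient $G_S \in \partial \|\hat P_S\|_1$ with $\|G_S\|_\infty \le 1$. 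The rank-one conclusion then follows from a strict spectral gap for $M_S - \lambda G_S$: the top eigenvalue equals the SDP value $\hat u_S^T M_S \hat u_S - \lambda \|\hat P_S\|_1$, which is at least the value at $P^* := u^*(u^*)^T$ with $u^* := \theta/\|\theta\|$, and that value is $\gtrsim \|\theta\|^2 - \lambda s \gtrsim \|\theta\|^2$ by $\|\theta\|^2 \geq 8\lambda s$; conversely, for any unit $v \perp \hat u_S$, $v^T(M_S - \lambda G_S)v \le (\theta_S^T v)^2 + \|E_S\|_{\mathrm{op}} + \lambda s$, which is much smaller since $|\theta_S^T v| \le \|\theta\|\sin\angle(\hat u_S, u^*)$ is controlled by Lemma~\ref{Lemma rates selec proj}. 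This gives $\beta \gtrsim \|\theta\|^2$, so $\hat P_S = \hat u_S \hat u_S^T$ and, symmetrically, $\hat P^{(i)}_S = \hat u_S^{(i)}(\hat u_S^{(i)})^T$.

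With rank one in hand, convexity of $\|\cdot\|_1$ gives $f(\hat P) - f(\hat P^{(i)}) \ge \langle M_S - \lambda G_S, \hat P_S - \hat P^{(i)}_S\rangle$, where $f(P) := \langle M, P\rangle - \lambda \|P\|_1$. Substituting the KKT identity (with $Y_S = \beta \hat u_S \hat u_S^T$), and using $W_S \hat P_S = 0$, $\mathrm{tr}(\hat P^{(i)}) = 1$, and $\langle W_S, \hat P^{(i)}_S\rangle \ge 0$, yields the curvature bound $f(\hat P) - f(\hat P^{(i)}) \ge \beta \|\hat P - \hat P^{(i)}\|_F^2/2$, and symmetrically for $f^{(i)}$. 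Summing the two inequalities (the penalties cancel since they do not depend on the data) and applying Cauchy--Schwarz in Frobenius gives $(\beta + \beta^{(i)}) \|\hat P - \hat P^{(i)}\|_F \le 2 \|(M-M^{(i)})_S\|_F$. Since $(M-M^{(i)})_S = (X_{i,S} X_{i,S}^T - X'_{i,S}(X'_{i,S})^T)/n$ is a difference of two rank-one matrices, Gaussian concentration of $\|\varepsilon_{i,S}\|^2 \sim \chi^2_s$ and $\theta_S^T \varepsilon_{i,S} \sim N(0,\|\theta\|^2)$ yields $\|X_{i,S}\|^2 \lesssim \|\theta\|^2 + s + \log p$ on an event of probability $1 - O(p^{-2})$, and combining gives $\|\hat P - \hat P^{(i)}\|_F \lesssim (\|\theta\|^2 + s + \log p)/(n\|\theta\|^2)$. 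Under the standing conditions $\kappa \le \sqrt{\log p}$ and $\|\theta\|^2 \le s\kappa^2$ (the latter since $\|\theta\|_\infty \le \kappa$), each of the three summands is bounded by $s \sqrt{\log p(\log p + \kappa^2)}$, so this is dominated by the claimed rate $\sqrt{(\log p + \kappa^2)/n}\sqrt{s^2 \log p / (n\|\theta\|^4)}$.

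The main obstacle I expect is the spectral-gap/rank-one step: the subgradient $G_S$ is not uniquely defined and couples with $\hat P_S$ through the sign pattern, so showing $v^T(M_S - \lambda G_S)v \ll \|\theta\|^2$ for all $v \perp \hat u_S$ requires combining the entrywise bound $\|G_S\|_\infty \le 1$ with the near-alignment of $\hat u_S$ and $u^*$ from Lemma~\ref{Lemma rates selec proj}. Once this spectral bound is in hand, the KKT curvature substitution and the Gaussian concentration for $\|X_{i,S}\|^2$ are both routine.
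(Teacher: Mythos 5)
Your overall strategy is the paper's: use the false-positive control of Lemma~\ref{Lemma rates selec proj} to localize both estimators to $S$, extract a curvature inequality of modulus $\gtrsim\|\theta\|^2$ from the KKT conditions (which also yields rank-oneness via a spectral gap), reduce to $\langle \hat M-\hat M^{(i)},\hat P-\hat P^{(i)}\rangle$, and finish by concentration of the single replaced sample. Your symmetrized version of the curvature step (summing the two strong-convexity inequalities so the penalties cancel) is an inessential variant of the paper's, which uses the curvature lemma once plus plain optimality of $\hat P^{(i)}$.

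Two soft spots. First, your spectral-gap argument is circular as written: you identify the top eigenvalue of $\hat M_S-\lambda G_S$ with $\hat u_S^T\hat M_S\hat u_S-\lambda\|\hat P_S\|_1$ and restrict to $v\perp\hat u_S$, but $\hat u_S$ is only well-defined (as the object you want) once rank-oneness is established, and the bound via $|\theta_S^Tv|\le\|\theta\|\sin\angle(\hat u_S,u^*)$ imports the alignment you are in the middle of proving. The paper avoids this entirely: it extends the subgradient off $S\times S$ by $\tilde Z_{ij}=\hat M_{ij}/\lambda$ so that $\hat M-\lambda\tilde Z$ is supported on $S\times S$, and then Lidskii/Weyl applied to the decomposition $\theta\theta^T+(\hat M-M-\lambda\tilde Z)_S$ gives $\lambda_1\ge\|\theta\|^2-2\lambda s$ and $\lambda_2\le 2\lambda s$ with no reference to $\hat u_S$ at all; rank-oneness and the curvature modulus then follow from Lemma~1 of Lei--Vu and Lemma~3.1 of Vu et al. You should replace your eigenvector-based gap argument with this perturbation bound ($\|(\hat M-M)_S\|_{\mathrm{op}}\le s\|\hat M-M\|_\infty\le\lambda s$, $\|\lambda G_S\|_{\mathrm{op}}\le\lambda s$). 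Second, your final bound $\|(\hat M-\hat M^{(i)})_S\|_F\lesssim(\|\theta\|^2+s+\log p)/n$ is too crude: the $\|\theta\|^2/n$ term forces you to invoke $\kappa\le\sqrt{\log p}$ and $\|\theta\|^2\le s\kappa^2$, which are hypotheses of Theorem~\ref{thm mainsimple} but not of this lemma. Since $X_{i,S}$ and $X'_{i,S}$ share the same mean $z_i\theta_S$, the rank-one difference expands as $z_i\theta_S(\varepsilon_{i,S}-\varepsilon'_{i,S})^T+z_i(\varepsilon_{i,S}-\varepsilon'_{i,S})\theta_S^T+\varepsilon_{i,S}\varepsilon_{i,S}^T-\varepsilon'_{i,S}(\varepsilon'_{i,S})^T$ and the $\theta_S\theta_S^T$ terms cancel exactly; the paper's route (bounding $\|\hat M-\hat M^{(i)}\|_\infty$ and pairing with $\|\hat P-\hat P^{(i)}\|_1\le 2s\|\hat P-\hat P^{(i)}\|_F$ on the $S\times S$ block) exploits this cancellation and lands on the stated rate without the extra assumptions.
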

\begin{proof}
We denote \begin{align*}
	\hat M= \frac{XX^T}{n}-I_p,~~~\hat M^{(i)}=\frac{X^{(i)}(X^{(i)})^T}{n}-I_p~~~\text{and}~~~M=\mathbb{E}\hat M=\theta \theta^T.
	\end{align*}
From now on we work on the event where the results of Lemma \ref{Lemma rates selec proj} apply to $\hat P$, $\hat M$, $\hat M^{(i)}$ and $\hat P^{(i)}$, which, by a union bound, occurs with probability at least $1-4p^{-2}$.

	By strong duality we have that
	\begin{align*}
	\max_{P \in \mathcal{F}^1} \langle \hat M, P \rangle - \lambda \|P\|_1 \iff \max_{P \in \mathcal{F}^1} \min_{Z \in B} \langle \hat M - \lambda Z, P \rangle,
	\end{align*}
	where $B:=\{Z: ~Z_{ii}=0, ~Z=Z^T,~\|Z\|_\infty \leq 1\}$ and by the KKT-condition a pair $(\hat P, \hat Z)$ is an optimal solution  if and only if  $\hat Z \in B$ and
	\begin{align*}
	\hat Z_{ij}=\text{sgn}(\hat P_{ij}), ~~(i,j) \in \hat S \times \hat S, ~~~~\hat P = \argmax_{P \in \mathcal{F}^1} \langle \hat M-\lambda \hat Z, P \rangle.\end{align*}
	Suppose now that $(\hat P, \hat Z)$ is an optimal solution. We define a new subgradient $\tilde Z$ by  $\tilde Z_{ij}=\frac{1}{\lambda} \hat M_{ij}$ for $(i,j) \notin S \times S$, $i \neq j$ and $\tilde Z_{ij}=\hat Z_{ij}$ otherwise. We now check that $\tilde Z$ is a valid subgradient and that the pair $(\hat P, \tilde Z)$ also fulfills the KKT-conditions and that it is therefore an optimal solution, too.
	
	For $i=j$ or $(i,j) \in S \times S$ we have by definition of $\tilde Z_{ij}$ that $|\tilde Z_{ij}|=|\hat Z_{ij}|\leq 1$. Moreover, for $i \neq j$ and $(i,j) \notin S \times S$ we have that $\theta_i\theta_j=0$ and hence we obtain
	\begin{align*}
	    |\tilde Z_{ij}|& =\left |\frac{1}{\lambda}\hat M_{ij} \right | = \left |\frac{(XX^T)_{ij}}{\lambda n } \right |  = \left | \frac{(\frac{XX^T}{n}-I_p-\theta \theta^T)_{ij}}{\lambda } \right | \\
	    & \leq \frac{\|\frac{XX^T}{n}-I_p-\theta \theta^T \|_\infty}{ \lambda} \leq 1,
	\end{align*}
where we also used that we work on the event where Lemma \ref{Lemma rates selec proj} applies.
Moreover, since $\hat M$ and $\hat Z$ are symmetric, $\tilde Z$ is symmetric, too. Hence, $\tilde Z$ is a valid subgradient, i.e. $\tilde Z \in B$.

Since we work on the event where Lemma \ref{Lemma rates selec proj} applies, we have that $\hat S \subset S$. Hence, and by definition of $\hat Z$ and $\tilde Z$, we have for $(i,j) \in \hat S \times \hat S$  that 
\begin{align*}
    \text{sgn}(\hat P_{ij})=\hat Z_{ij}=\tilde Z_{ij}. 
\end{align*}
Using again that $\hat S \subset S$ where for $(i,j) \in \hat S \times \hat S$ we have that $\hat Z_{ij}=\tilde Z_{ij}$ we obtain
\begin{align*}
    \langle \hat M-\lambda \tilde Z, \hat P \rangle & = \sum_{(i,j) \in \hat S \times \hat S }(\hat M-\lambda \tilde Z)_{ij} \hat P_{ij} = \sum_{(i,j) \in \hat S \times \hat S }(\hat M-\lambda \hat Z)_{ij} \hat P_{ij} \\ & =   \langle \hat M-\lambda \hat Z, \hat P \rangle
\end{align*}
and hence, by definition of $\hat P$ and $\hat Z$ we have that $\hat P=\argmax_{P \in \mathcal{F}^1}\langle \hat M-\lambda \tilde Z, P \rangle$. Therefore, the KKT-conditions are fulfilled for the pair $(\hat P, \tilde Z)$.

We now show the assertions. Indeed, 
	we have that
	\begin{align*} 
	\hat M-\lambda \tilde Z =  \begin{cases} \hat M_{ij}-\lambda \hat Z_{ij} ~~~(i,j) \in S \times S ,\\
	0~~~~~\text{else}, \end{cases}
	\end{align*}
	and particularly, by Lidski's inequality the first eigenvalue of $\hat M-\lambda \tilde Z$ is lower bounded by 
	\begin{align*}
	\|M\|-\|(\hat M-M-\lambda \tilde Z)_S\| &  \geq \|\theta\|^2-\lambda s-s\|\hat M-M\|_\infty  \\ & \geq \|\theta\|^2-2\lambda s
	> 2\lambda s > 0,
	\end{align*}
	and by the same argument all other eigenvalues of $\hat M$ are upper bounded by $2 \lambda s$. Hence, and since in addition $\mathcal{F}^1$ is the convex hull of its extremal points, the set of rank one projection matrices, $\hat P$ coincides with the projection matrix onto the first eigenspace of $\hat M -\lambda \tilde Z$ (see also Lemma 1 in \cite{LeiVu15}). Likewise, we see that $\hat P^{(i)}$ is also a spectral projector of rank one. Moreover, the first spectral gap of $\hat M-\lambda \tilde Z$ is, by the above reasoning, lower bounded by $\|\theta\|^2-4\lambda s\geq \|\theta\|^2/2$. We now apply the curvature Lemma 3.1 in \cite{VuChoLeiRohe13} to obtain that 
	\begin{align*}
	\|\hat P- \hat P^{(i)}\|_F^2 \leq \frac{2}{\|\theta\|^2-4\lambda s} \left \langle \hat M-\lambda \tilde Z, \hat P-\hat P^{(i)} \right  \rangle.
	\end{align*}
	Moreover, by definition of $\hat P^{(i)}$ we have that,
	\begin{align*}	\left \langle -\hat M^{(i)}, \hat P-\hat P^{(i)} \right  \rangle +\lambda \|\hat P\|_1-\lambda \|\hat P^{(i)}\|_1 \geq  0. \end{align*}
Hence, we obtain that
	\begin{align}
&	\| \hat P-\hat P^{(i)}\|_F^2  \notag  \\\leq & \frac{4}{\|\theta\|^2} \left [ \left \langle  \hat M-\hat M^{(i)}, \hat P-\hat P^{(i)} \right \rangle + \lambda \|\hat P\|_1-\lambda \|\hat P^{(i)}\|_1-\lambda \langle \tilde Z, \hat P-\hat P^{(i)} \rangle \right ]. \label{lemma72proof1}
	\end{align}
To bound the last three terms above, we observe that
\begin{align*}
\langle \tilde Z, \hat P-\hat P^{(i)} \rangle & = \sum_{k,j} \tilde Z_{kj} \hat P_{kj}-\sum_{k,j} \tilde Z_{kj}\hat P^{(i)}_{kj} \\ & = \sum_{(k , j) \in \hat S \times \hat S, ~k \neq j  } \text{sgn}(\hat P_{kj})\hat P_{kj}-\sum_{(k,j) \in \hat S \times \hat S, ~k\neq j} \tilde Z_{kj}\hat P^{(i)}_{kj} \\
& \geq \|\hat P\|_1-\|\hat P^{(i)}\|_1,
\end{align*}
where we used that $\tr(\hat P)=\tr(\hat P^{(i)})=1$, that $\hat P_{ii} \geq 0$ and that $\|\tilde Z\|_\infty \leq 1$. 
Hence the sum of the three last terms in \eqref{lemma72proof1} is bounded above by zero and we estimate
\begin{align*}
\|\hat P-\hat P^{(i)}\|_F^2 & \leq \frac{4}{\|\theta\|^2} \left \langle \hat M-\hat M^{(i)}, \hat P-\hat P^{(i)} \right \rangle \\
& \leq  \frac{4}{\|\theta\|^2} \|\hat M-\hat M^{(i)}\|_\infty \|\hat P-\hat P^{(i)}\|_1 \\
& \leq \frac{8s}{\|\theta\|^2} \left ( \left  \| \frac{\varepsilon_i \varepsilon_{i}^T-\varepsilon_i'(\varepsilon_{i}')^T}{n} \right \|_\infty + \left \| \frac{\theta z_i (\varepsilon_{i}-\varepsilon_{i}')^T}{n} \right \|_\infty  \right)   \|\hat P-P^{(i)}\|_F,
\end{align*}
where we used that on the event we work on we have by  Lemma \ref{Lemma rates selec proj} that $\hat S \subset S$ and $\hat S^{(i)} \subset S$. Moreover, by Gaussian concentration, we have with probability at least $1-p^{-2}$ that  $\| \varepsilon_i \varepsilon_i^T-\varepsilon_i' (\varepsilon_i')^T \|_\infty \leq  4\log(p)$. Applying Gaussian concentration again we bound with probability at least $1-p^{-2}$ 
	\begin{align*}
	\| \left (\theta z_i (\varepsilon_{i}-\varepsilon_{i}')^T \right )\|_\infty \lesssim \kappa \sqrt{\log(p)}. 
	\end{align*}
	Thus, concluding, using another union bound, to incorporate the two events we have on an event of probability at least $1-6p^{-2}$ that 
		\begin{align*}
	\| \hat P-\hat P^{(i)}\|_F &  \lesssim \frac{s\log(p)+s\kappa \sqrt{\log(p)}}{n\|\theta\|^2}. 
	\end{align*}
\end{proof}
\noindent We are now ready to prove Theorem \ref{thm mainsimple}.
\begin{proof}
	Recall that
	\begin{align*}
	\ell (\hat z, z) = \inf_{\pi  \in \{-1,1\}} \frac{1}{n}\sum_{i=1}^n \mathbf{1} ( \pi  \hat z_i \neq z_i). 
	\end{align*}
	Throughout we work on the global event $\Omega$  where the statement of Lemmas \ref{Lemma rates selec proj} and \ref{Lemma leave one out} hold for all $i$, which by a union bound occurs with probability at least $1-8p^{-1}$. We denote this event by $\Omega$. 
Fix one particular $z_i$ and suppose without loss of generality that $z_i=1$. We have that  
\begin{align*}
\mathbf{1} \left ( z_i \neq \pi \hat z_i\right ) = \mathbf{1} \left ( \pi  \hat u^TX_i \leq 0 \right ).
\end{align*}
We define $u=\theta/\|\theta\|$ such that $P=uu^T$. 
Since $\pi$ is either $-1$ or $+1$ for all $i$ we may choose $\pi$ such that $\langle u, \pi \hat u \rangle = \frac{1}{\|\theta\|}\langle \theta , \pi  \hat u \rangle \geq 0$, meaning that we can henceforth assume $\pi=1$ and $\langle u, \hat u \rangle \geq 0$. Hence, we obtain that
\begin{align*}
\mathbf{1} \left ( z_i \neq \pi \hat z_i\right ) \mathbf{1}_{\Omega} & = \mathbf{1} \left ( \hat u^T\theta+ \hat u^T \varepsilon_{i} \leq 0 \right ) \mathbf{1}_{\Omega}\\& \leq   \mathbf{1} \left ( u^T\theta -\|u-\hat u\|\|\theta\|+(\hat u^{(i)})^T\varepsilon_{i}+(\hat u-\hat u^{(i)})^T \varepsilon_{i} \leq 0 \right )\mathbf{1}_{\Omega},
\end{align*}
where we pick $\hat u^{(i)}$ such that $\hat P^{(i)}=\hat u^{(i)} (\hat u^{(i)})^T$ and $\langle \hat u^{(i)}, \hat u \rangle \geq 0$. 
Since $\langle u, \hat u \rangle \geq 0$ we have that 
$$\|\hat u-u\| \mathbf{1}_{\Omega}\leq \|\hat P-P\|_F\mathbf{1}_{\Omega} \lesssim \sqrt{ \frac{ s^2 \log(p)(1+\kappa^2)}{n \|\theta\|^2}} \lesssim \tau_n,$$
per our assumptions. 
Moreover, since on $\Omega$ we have $\text{supp}(\hat P) \subset S$ and $\text{supp}(\hat P^{(i)}) \subset S$, we see that $\hat u$ and $\hat u^{(i)}$ have zero entries at coordinates $i \notin S$. Hence, we obtain
\begin{align*}
|(\hat u-\hat u^{(i)})^T \varepsilon_{i}| \mathbf{1}_{\Omega} =| (\hat u-\hat u^{(i)})_S^T (\varepsilon_{i})_S|\mathbf{1}_{\Omega} \leq \|(\varepsilon_i)_S\| \|\hat u-\hat u^{(i)}\| \mathbf{1}_{\Omega}.
\end{align*}
By Jensen's inequality we have that 
$$\mathbb{E} \|(\varepsilon_i)_S\| \leq \sqrt{s}$$
and using furthermore Borell's inequality (e.g. Theorem 2.2.7 in \cite{GineNickl16}), recognizing that $\|(\varepsilon)_S\| = \sup_{ g \in \mathbb{R}^s:~\|g\|=1} \langle (\varepsilon)_S, g \rangle $, implies that with probability at least $1-e^{-\|\theta\|^2}$
$$\|(\varepsilon_i)_S\| \leq  \sqrt{s}+\sqrt{2}\|\theta\| \lesssim \sqrt{s}(1+\kappa).
$$
Hence, restricting to $\Omega$ where Lemma \ref{Lemma leave one out} holds and since $\langle \hat u, \hat u^{(i)} \rangle \geq 0$ we obtain that with probability at least $1-e^{-\|\theta\|^2}$ 
\begin{align*}
| (\hat u-\hat u^{(i)})^T \varepsilon_{i}| \mathbf{1}_{\Omega} \leq &   \|\hat P-\hat P^{(i)}\|_F \|(\varepsilon_{i})_S\| \mathbf{1}_{\Omega} \\ & \lesssim \sqrt{s}(1+\kappa) \sqrt{\frac{\log(p)+\kappa^2}{n}} \sqrt{\frac{s^2 \log(p)}{n\|\theta\|^4}} \\
& \lesssim \frac{s^2 \log(p) (1+\kappa^2)}{n\|\theta\|^3} =
\tau_n^2 \|\theta\|,
\end{align*}
where we used that $\|\theta\| \leq \sqrt{s}\kappa$ and $\kappa \leq \sqrt{\log(p)}$ by assumption. 
Hence, we have for some constant $c>0$ with probability at least $1-e^{-\|\theta\|^2}$
\begin{align*}
\mathbf{1} \left ( z_i \neq \pi \hat z_i\right ) \mathbf{1}_{\Omega} \leq \mathbf{1} \left ( \|\theta\| (1-c \tau_n) + (\hat u^{(i)})^T \varepsilon_i \leq 0\right ).
\end{align*}
By construction of $\hat u^{(i)}$, $\hat u^{(i)}$ and $\varepsilon_{i}$ are independent and hence $(\hat u^{(i)})^T \varepsilon_i $ is univariate Gaussian (with mean zero and variance one)  distributed. Thus, by a standard tailbound for Gaussian random variables we obtain that 
\begin{align*}
\mathbb{E} \mathbf{1} \left ( \|\theta\| (1-c \tau_n^2) + (\hat u^{(i)})^T \varepsilon_i \leq 0\right ) = \Phi (-\|\theta\| (1-c \tau_n)) \leq e^{-\|\theta\|^2 (1-c \tau_n)^2/2}
\end{align*}
where $\Phi$ denotes the C.D.F.\ of a standard Gaussian random variable. Summarizing, we have, summing over each $i$ (after proper global permutation) that
\begin{align*} \mathbb{E} [\ell(\hat z, z) \mathbf{1}_\Omega ] & \leq \frac{1}{n} \sum_{i=1}^n\mathbb{E} \left [\mathbf{1} \left ( z_i \neq \pi \hat z_i \right ) \mathbf{1}_\Omega \right ] \\ &  \leq e^{-\|\theta\|^2}+e^{-\|\theta\|^2 (1-c \tau_n)^2/2} \leq 2 e^{-\|\theta\|^2 (1-c \tau_n)^2/2}.  \end{align*}
Therefore, applying Markov's inequality and using a union bound to account for $\Omega$, we have with probability at least $1-2e^{-\|\theta\|/2}-8p^{-1}$ that
\begin{align*}
\ell (\hat z, z) \leq 2 e^{-\|\theta\|^2 (1-c\tau_n-\|\theta\|^{-1})/2}.
\end{align*}
\end{proof}
\subsection{Proof of Theorem \ref{thm:complowertheta}}
\begin{proof}
	Let $\pi$ denote the prior on $(\theta,z)$ described in Definition~\ref{def:PQ}. By Theorem 2.6 in \cite{KuniskyWeinBandeira19} we have that
	\begin{align*}
	\| L_n^{\leq D} \|^2 &  = \Ex_{(\theta, z), (\tilde \theta, \tilde z) \sim \pi} \sum_{d=0}^D \frac{1}{d!} \langle z, \tilde z \rangle^d \langle \theta, \tilde \theta \rangle^d \\
	& = 1+\Ex_{(\theta, z), (\tilde \theta, \tilde z) \sim \pi} \sum_{d=1}^{\lfloor D/2 \rfloor} \frac{1}{(2d)!} \langle z, \tilde z \rangle^{2d} \langle \theta, \tilde \theta \rangle^{2d}.
	\end{align*}
	Here, $(\theta,z)$ and $(\tilde \theta, \tilde z)$ are drawn independently from $\pi$. We bound the quantities in the sum above one after the other. Observe that $\langle z, \tilde z \rangle$ is a sum of $n$ i.i.d. Rademacher random variables $R_i, ~i=1, \dots, n$.
	We argue as in the proof of the Khinchine inequality. Indeed, denoting by $g_1, \dots ,g_n$ i.i.d.\ standard Gaussians we have by Jensen's inequality for any even $d_i \geq 2$ that $\mathbb{E} g_i^{d_i} \geq (\mathbb{E} g_i^2)^{d_i/2}=1=\mathbb{E} R_i^{d_i}$. Hence, we obtain that
	\begin{align*}\mathbb{E} \left ( \sum_{i=1}^n R_i\right )^{2d}  \notag & =  \sum_{2d_1+\dots+ 2d_n=2d} \mathbb{E} R_1^{2d_1}  \dots  R_n^{2d_n} \leq \sum_{2d_1+\dots+ 2d_n=2d} \mathbb{E} g_1^{2d_1}  \dots g_n^{2d_n} \\ &  = \mathbb{E} \left ( \sum_{i=1}^n g_i\right )^{2d} = n^d \mathbb{E} g_1^{2d}=n^d (2d-1)!!, \end{align*}
	where $(d-1)!!:=(d-1) (d-3) \cdots 3 \cdot 1$.
	Next, given support sets $S$ and $\tilde S$, observe that \begin{align*}
	\langle \theta, \tilde \theta \rangle | S, \tilde S  \overset{d}{=} \frac{\Delta^2}{s} \sum_{i \in S \cap \tilde S} R_i,  \notag 
	\end{align*}
	where $R_i$ are i.i.d. Rademacher random variables again. Hence, arguing as before using comparison to Gaussian random variables, we obtain
	\begin{align*}
	\mathbb{E} \langle \theta, \tilde \theta \rangle^{2d} & = \left (\frac{\Delta^2}{s} \right )^{2d}\mathbb{E} \left [ \mathbb{E}\left ( \sum_{i \in S \cap \tilde S} R_i \right )^{2d} \bigg | S, \tilde S \right ] \\
	& \leq (2d-1)!! \left (\frac{\Delta^2}{s} \right )^{2d} \mathbb{E} |S \cap \tilde S |^d. 
	\end{align*}
	Define $Z_{i}=\mathbf{1}( i \in \mathcal{S}), \tilde Z_i=\mathbf{1}(i\in \mathcal{\tilde S})$ and observe that
	\begin{align*}
	|S \cap \tilde S | = \sum_{i=1}^p Z_i \tilde Z_i.
	\end{align*}
We have that $\mathbb{E} Z_i=\mathbb{E}\tilde Z_i=\frac{s}{p}$ and hence the $Z_i$ are Bernoulli random variables with success probability $s/p$,  but not independent as they are drawn from a finite population. Conditionally on $\tilde S$, we have that $|S \cap \tilde S|=\sum_{i \in \tilde S} Z_i$ is a sum of $|\tilde S|$ random variables drawn at random without replacement.  Moreover, $|\cdot |^d$ is a continuous convex function. Hence, by the tower property of expectation and by applying Theorem 4 in \cite{Hoeffding63} (which compares the moments of random samples drawn with and without replacement) we obtain that 
%
\begin{align*}
\mathbb{E} \left | \sum_{i=1}^p Z_i \tilde Z_i\right |^d & =  \mathbb{E} \left [ \mathbb{E} \left |\sum_{i \in \tilde S} Z_i \right |^d\bigg | \tilde S \right ] \\ &  \leq \mathbb{E} \left [ \mathbb{E} \left |\sum_{i \in \tilde S} B_i \right |^d\bigg | \tilde S \right ] 
= \mathbb{E} \left|\sum_{i=1}^s B_i\right|^d,
\end{align*}
where $B_i$ are i.i.d. Bernoulli random variables with success probability $s/p$ each. 
Since the $B_i$ are independent, bounded by one and have variance bounded by $s/p$ we obtain by Bernstein's inequality that

\begin{align*}
\mathbb{P} \left ( \left |  \sum_{i=1}^s (B_i - \mathbb{E} B_i) \right | > t  \right ) \leq 2\exp \left ( - \frac{t^2}{2\frac{s^2}{p}+\frac{2}{3}t}\right ). 
\end{align*}
Hence, by the triangle inequality for the $\ell_d$-norm, we obtain that 
\begin{align*}
\left [\mathbb{E} \left | \sum_{i=1}^s B_i\right |^d \right ]^{1/d} & \leq \frac{s^2}{p} + \left [\mathbb{E} \left | \sum_{i=1}^s (B_i - \mathbb{E} B_i) \right |^d \right ]^{1/d} \\
& = \frac{s^2}{p} + \left [\int_0^{\infty} \mathbb{P} \left (  \left | \sum_{i=1}^s (B_i - \mathbb{E} B_i)\right | > t^{1/d}\right ) \text{d}t \right ]^{1/d} \\
& \leq \frac{s^2}{p} + 2^{1/d}\left [\int_0^{\infty} \exp \left ( -\frac{t^{2/d}}{2\frac{s^2}{p}+\frac{2}{3}t^{1/d}}\right ) \text{d}t \right ]^{1/d}. 
\end{align*}
We bound the integral above. Indeed, we have that
\begin{align*}
\int_0^{\infty} \exp \left ( -\frac{t^{2/d}}{2\frac{s^2}{p}+\frac{2}{3}t^{1/d}}\right ) \text{d}t & \leq \int_0^{\infty} \exp \left ( -\frac{t^{2/d}}{4\frac{s^2}{p}}\right ) \text{d}t + \int_0^{\infty} \exp \left ( -\frac{3t^{1/d}}{4}\right ) \text{d}t \\
& \leq \sqrt{\frac{\pi}{2}}\left(\frac{2ds^2}{p}\right)^{d/2} + \sqrt{\frac{\pi}{2}}\left (\frac{4d}{3} \right )^d,
\end{align*}
where we have used the following identity (which can be obtained from the definition of the Gamma function): for all $a > 0$ and $b \ge 1/2$,
\[ \int_0^\infty \exp(-ax^{1/b}) \text{d}x = \frac{b\Gamma(b)}{a^b} = \frac{\Gamma(b+1)}{a^b} \le \sqrt{\frac{\pi}{2}}\left(\frac{b}{a}\right)^b. \]
	Hence, overall we obtain that
	\begin{align*}
	\|L_n^{\leq D} \|^2& -1 =\Ex_{(\theta, z), (\tilde \theta, \tilde z) \sim \pi} \sum_{d=1}^{\lfloor D/2 \rfloor} \frac{1}{(2d)!} \langle z, \tilde z \rangle^{2d} \langle \theta, \tilde \theta \rangle^{2d} \\
	& \leq \sum_{d=1}^{\lfloor D/2 \rfloor} \left(\frac{n\Delta^4}{s^2}\right)^d \left\{\frac{s^2}{p} + 2^{1/d} \left(\frac{\pi}{2}\right)^{\frac{1}{2d}} \left[\left(\frac{2ds^2}{p}\right)^{d/2} + \left(\frac{4d}{3}\right)^d\right]^{1/d}\right\}^d \\
	& \leq \sum_{d=1}^{\lfloor D/2 \rfloor} \left(\frac{n\Delta^4}{s^2}\right)^d \left\{\frac{s^2}{p} + 2^{1/d} \left(\frac{\pi}{2}\right)^{\frac{1}{2d}} \left(\sqrt{\frac{2ds^2}{p}}  + \frac{4d}{3}\right)\right\}^d \\
	& \leq \sum_{d=1}^{\lfloor D/2 \rfloor}  \left ( \frac{n \Delta^4}{p} +\frac{4n \Delta^4 D}{s^2}+\frac{4 n \Delta^4 D^{1/2}}{p^{1/2}s}\right )^d \\
	&= \sum_{d=1}^{\lfloor D/2 \rfloor}  \left (  \sqrt{\frac{n \Delta^4 }{p}}+ \sqrt{\frac{4n \Delta^4 D}{s^2}} \right )^{2d} =O \left ( {\frac{n \Delta^4 }{p}}+ {\frac{4n \Delta^4 D}{s^2}}\right )
	\end{align*}
	per our assumptions and where we used that $((2d-1)!!)^2/(2d)! \leq 1$.
\end{proof}
\subsection{Proof of Theorem \ref{thm:complowerz}}

\begin{proof}
Assuming that \eqref{thm:complowerz power} holds and given data $[X_1, \dots, X_n]=:X$ either generated from $\mathbb{P}_0$ or $\mathbb{P}_{\theta,z}$, $(\theta (1-\epsilon), z) \in \mathcal{P}_\Delta$, we perform the following sample splitting trick: we denote $E=[\varepsilon_1,\dots, \varepsilon_n]$ and generate $\tilde E$ such that $\tilde E \overset{d}{=}E$ and $\tilde E$ and $E$ are independent and for $\epsilon \in (0,1)$ define
	\begin{equation*}
	X^{(1)}= \frac{X+\frac{1}{\epsilon}\tilde E}{\sqrt{1+\frac{1}{\epsilon^2}}}~~~~~\tilde X^{(2)}=\frac{X-\epsilon \tilde E }{\sqrt{1+\epsilon^2}}.
	\end{equation*}
	Since for fixed $(\theta, z)$ \begin{align} \label{bound cov ss} \text{Cov} \left (	X^{(1)}, 	X^{(2)} \right )=\frac{1}{\sqrt{2+\epsilon^2+\frac{1}{\epsilon^2}}}\text{Cov} \left (E+\frac{1}{\epsilon}\tilde E, E-\epsilon \tilde E \right )=0,
	\end{align} and $E$ and $\tilde E$ are Gaussian, we obtain that $	X^{(1)}$ and $	X^{(2)}$ are independent. Moreover, since $	X^{(2)}=(\theta/\sqrt{1+\epsilon^2})z^T +(E- \epsilon \tilde E)/\sqrt{1+\epsilon^2}$, $(\theta/\sqrt{1+\epsilon^2}, z) \in \mathcal{P}_\Delta$ (as $1-\epsilon^2 \leq  1/\sqrt{1+\epsilon^2})$ and $(E-\epsilon \tilde E)/\sqrt{1+\epsilon^2}\overset{d}{=}E$ we see that $	X^{(2)}$ can be viewed as generated from a parameter in $\mathcal{P}_\Delta$. Hence, by assumption in \eqref{thm:complowerz power}, we have that $$\mathbb{P}_{\theta,z} \left ( \ell(\hat z(	X^{(2)}), z) > \delta \right )\leq 1-\alpha $$ and by construction $\hat z=\hat z(	X^{(2)})$ is independent of $	X^{(1)}$ and therefore $\hat z$ and $(E+\frac{1}{\epsilon} \tilde E)$ are also independent. Next define a test statistic
	\begin{equation*}
	T_n^2:= \sum_{i=1}^s \left [ \left ( \frac{ X^{(1)}\hat z}{n}\right )_{(i)} \right ]^2,
	\end{equation*}
	where $a_{(i)}$ denotes the $i$-th largest (in absolute value) element of $a$. Note that $(E+\frac{1}{\epsilon}\tilde E)\hat z/\sqrt{1+\frac{1}{\epsilon^2}}  \thicksim \mathcal{N}(0, n I_p)$. Hence, by Proposition E.1. in \cite{BellecLecueTsybakov18}, we have that with probability at least $1-s/(2p)$
	\begin{align*}
	    \sum_{i=1}^s \left [\left (\frac{ \left (E+\frac{1}{\epsilon}\tilde E \right )\hat z}{n\sqrt{1+\frac{1}{\epsilon^2}}}\right )_{(i)} \right ]^2  \leq \frac{6 s \log(ep/s)}{n}.
	\end{align*}

	Thus, 
	under $H_0: \theta=0$ we bound with probability at least $1-s/(2p)$
	\begin{equation*}
	T_n^2 = \sum_{i=1}^s \left [\left (\frac{ \left (E+\frac{1}{\epsilon}\tilde E \right )\hat z}{n\sqrt{1+\frac{1}{\epsilon^2}}}\right )_{(i)} \right ]^2 \leq \frac{6s \log(ep/s)}{n}.
	\end{equation*}
	Now consider the alternative $H_1: (\theta/\sqrt{1+\epsilon^2}, z) \in \mathcal{P}_\Delta$. Denote by $i(\theta)$ the index that corresponds to the $i$-th largest element of $\theta$. 
	Hence, on the event $$\{ \hat \ell(\hat z(X^{(2)}, z)) \leq \delta \} \cap  \left \{ 	    \sum_{i=1}^s \left [\left (\frac{ \left (E+\frac{1}{\epsilon}\tilde E \right )\hat z}{n\sqrt{1+\frac{1}{\epsilon^2}}}\right )_{(i)} \right ]^2  \leq \frac{6 s \log(ep/s)}{n} \right \}$$ we obtain that
	\begin{align*}
	T_n & \geq \left ( \sum_{i=1}^s \left [\left ( \frac{X^{(1)} \hat z}{n}\right )_{i(\theta)} \right ]^2 \right )^{1/2} \\ &  \geq \left \| \frac{\theta z^T \hat z}{n\sqrt{1+\frac{1}{\epsilon^2}}} \right \|-\left ( \sum_{i=1}^s \left [ \left (\frac{ (E+\frac{1}{\epsilon}\tilde E)\hat z}{n\sqrt{1+\frac{1}{\epsilon^2}}} \right )_{i(\theta)} \right  ]^2 \right )^{1/2} \\ 
	& \geq \frac{\| \theta \|}{\sqrt{1+\frac{1}{\epsilon^2}}} (1-2\ell(\hat z, z) ) - \sqrt{\frac{6s\log(ep/s)}{n}}   \\ & \geq \frac{\Delta}{\sqrt{1+\frac{1}{\epsilon^2}}}  (1-2\delta) - \sqrt{\frac{6s\log(ep/s)}{n}}.
	\end{align*}
	Hence, for $$\Delta > 2 \sqrt{6}(1-2\delta)^{-1} \sqrt{1+\frac{1}{\epsilon^2}} \sqrt{\frac{s \log(ep/s)}{n}},$$ defining the test
	$${t_n(X)}:=\begin{cases} 1 ~~~~~~~\text{if}~~~T_n^2 > \frac{6s \log(ep/s)}{n} \\
	0 ~~~~~~~\text{else} \end{cases}
	$$
	we have that 
	\begin{align*}
 \mathbb{E}_0 [t_n(X)] + \sup_{(\theta/\sqrt{1+\epsilon^2}, z) \in \mathcal{P}_\Delta} \mathbb{E}_{\theta,z} \left [1-t_n(X)  \right ] & \leq (p/s)^{-1}+1-\alpha.
	\end{align*}
Finally, after having obtained  $\hat z$ which has complexity  $O(r_n)$ by assumption,  the complexity of calculating $t_n$ is dominated by the matrix-vector multiplication $X^{(1)} \hat z$ which has complexity $O(np)$.
\end{proof}

\appendix

\section{Proof of Lemma \ref{Lemma rates selec proj}}
\begin{proof}
Note that 
$$ \mathbb{E} \left [\frac{XX^T}{n}-I_p \right ] = \theta \theta^T=\|\theta\|^2P.$$
We first control the deviations from the mean in $\ell_\infty$-norm. Indeed, denoting $X=[X_1, \dots, X_n]$ and $E=[\varepsilon_1, \dots, \varepsilon_n]$ and $z=(z_1, \dots, z_n)^T$, we have that
\begin{align*}
\left \| \frac{XX^T}{n}-I_p-\theta \theta^T \right \|_\infty \leq \left \| \frac{EE^T}{n}-I \right \|_\infty + 2 \left \| \frac{\theta z^TE^T}{n} \right \|_\infty=:T_1+T_2
\end{align*}
	The first term $T_1$ can be bound using Bernstein's inequality and a union bound with probability $1-p^{-2}$ by $c \sqrt{\log(p)/n}$, compare e.g. the proof of Lemma D.1 in \cite{VuLei13}, noting that the assumption $\log(p)\leq n$ ensures that the Gaussian component in the exponential inequality dominates.
	For the second term $T_2$ we use Gaussian concentration directly. Indeed, we have that
	\begin{align*}
	(\theta z^T E^T)_{jl} = \sum_{i=1}^n z_i \theta_j \varepsilon_{il} \thicksim \mathcal{N}(0,n \theta_j^2),
	\end{align*}
	and hence, using a union bound and Gaussian concentration we obtain that with probability at least $1-p^{-2}$
	\begin{equation*}
	T_2\leq c' \kappa  \sqrt{\log(p)/n } . 
	\end{equation*}
	Denote $M=\theta \theta^T$ and $\hat M=XX^T/n-I_p$. 
Hence, overall, with probability at least $1-2p^{-2}$ the following event occurs
\begin{align*}
\Omega:= \left \{ \left \| \frac{XX^T}{n}-I_p-\theta \theta^T \right \|_\infty < C (1+\kappa)\sqrt{\log(p)/n} \right \} = \{ \| M-\hat M \|_\infty < \lambda \}.
\end{align*}
For the rest of the proof suppose that we work on $\Omega$.
Since $P \in \mathcal{F}^1$ we have by definition of the objective function that
\begin{equation*}
\langle \hat M, \hat P \rangle - \lambda \|\hat P\|_1 \geq \langle \hat M, P \rangle - \lambda \|P \|_1
\end{equation*}
Using curvature Lemma 3.1 from \cite{VuLei13} for the first inequality  and afterwards the above inequality we obtain that 
\begin{align*}
\| \hat P - P \|_F^2 & \leq \frac{2}{\|\theta\|^2} \langle M, P-\hat P \rangle = \frac{2}{\|\theta\|^2} \left [ \langle \hat M, P-\hat P \rangle + \langle M-\hat M, P-\hat P \rangle \right ] \\
& \leq \frac{2}{\|\theta\|^2} \left [\lambda \|P\|_1-\lambda \|\hat P\|_1 + \|M-\hat M \|_\infty \|P-\hat P\|_1  \right ] \\
& \leq \frac{2 \lambda }{\|\theta\|^2} \left [ \|P_S\|_1 - \|\hat P_S\|_1 + \|(P-\hat P)_S\|_1 \right ] \\
& \leq \frac{4 \lambda s \|P-\hat P\|_F }{\|\theta\|^2}.
\end{align*}
This shows the first assertion. We next prove false positive control. Recall that
\begin{align} \label{objective}
\hat P=\argmax_{P \in \mathcal{F}^1} \langle \hat M, P \rangle - \lambda \|P\|_1. 
\end{align}
We first show that there exists at least one $\tilde P \in \mathcal{F}^1$ such that 
\begin{align*}
    \langle \hat M, \tilde P \rangle - \lambda \|\tilde P\|_1 \geq 0.
\end{align*}
Indeed, setting $\tilde P=P=\theta \theta^T /\|\theta\|^2$, we have that 
\begin{align*}
        \langle \hat M, \tilde P \rangle - \lambda \|\tilde P\|_1 \geq \|\theta\|^2 - \|\hat M-M\|_\infty \|\tilde P\|_1 - \lambda \|\tilde P\|_1 \geq \|\theta\|^2 - 2\lambda s \geq 0,
\end{align*}
where we used that we work on $\Omega$ and that $\|\theta\|^2 \geq 2 \lambda s$. 
Hence, it suffices to consider $\tilde P \in \mathcal{F}^1$ such that  $ \langle \hat M,  \tilde P \rangle - \lambda \| \tilde P\|_1 \geq 0$ and show for those that have support with indices outside of $S$ leads to a strictly suboptimal objective value. 

We first show that any $\tilde P$ which does not select at least one coordinate in $S$ has negative objective value. Indeed, we have for $\tilde S $ with $\text{supp}(\tilde S) \subset S^c$ that 
\begin{align*}
    \langle \hat M, \tilde P \rangle - \lambda \|\tilde P\|_1 = \langle \hat M-M, \tilde P \rangle - \lambda \|\tilde P\|_1 \leq \|\hat M-M\|_\infty \|\tilde P\|_1 - \lambda \|\tilde P\|_1 < 0,
\end{align*}
where we used again that on $\Omega$ we have $\|\hat M-M\|_\infty < \lambda$. 
 Hence, if $  \langle \hat M,  \tilde P \rangle - \lambda \| \tilde P\|_1 \geq 0$ then $\tilde P_{jj}$ is positive for at least one coordinate in $S$.  Observe that
\begin{align*}
\langle \hat M, \tilde P \rangle - \lambda \|\tilde P\|_1 = \langle \hat M_{S}, \tilde P_{S} \rangle -\lambda \|\tilde P_S\|_1 + \sum_{(i,j) \notin S \times S} \hat M_{ji} \tilde P_{ij}-\lambda |\tilde P_{ji}|.
\end{align*} 
Recalling that we work on the event $\Omega$ where $\|\hat M-M\|_\infty < \lambda$, we have for $\tilde P$ with $\text{supp}(\tilde P) \not \subset S$ that
\begin{align*}
\sum_{(i,j) \notin S \times S} \hat M_{ji}\tilde P_{ij}-\lambda |\tilde P_{ji}| & \leq \sum_{(i,j) \notin S \times S} (|\hat M_{ij}|-\lambda) |\tilde P_{ij}| \\
& \leq  \sum_{(i,j) \notin S \times S} (\|\tilde M-M\|_\infty -\lambda) |\tilde P_{ij}|  < 0. 
\end{align*}
Since $   \langle \hat M,  \tilde P \rangle - \lambda \| \tilde P\|_1 \geq 0$ there exists at least one $j \in S$ such that $\tilde P_{jj} > 0$ and hence
 we have $\tr (\tilde P_S)>0$. 
Hence, we can define $\check P:=\tilde P_S/\tr (\tilde P_S)$ and have that
\begin{align*}
    \langle \hat M , \tilde P \rangle - \lambda \|\tilde P\|_1 & <  \langle \hat M_S, \tilde P_S \rangle - \lambda \|\tilde P_S\|_1 \\ & \leq 
        \frac{1}{\tr (\tilde P_S)} \left [ \langle \hat M, \tilde P_S, \rangle - \lambda \|\tilde P_S\|_1 \right ] = \langle \hat M, \check P \rangle - \lambda \|\check P\|_1.
\end{align*}
It is left to check that $\check P \in \mathcal{F}^1$. Indeed, we have that $\check P^T=\check P$ as $\tilde P$ is symmetric, $\tr(\check P) =1$, $\check P$ is positive semi-definite as $x^T \check Px \geq  x_S^T \tilde P x_S \geq 0$, where we used that $\tilde P$ is positive semi-definite and $\|\check P\| \leq \tr (\check P) = 1$.  Hence, $\check P \in \mathcal{F}^1$. We conclude that for any $\tilde{P} \in \mathcal{F}^1$ with positive objective value and support not entirely in $S$ we can find some $\check P \in \mathcal{F}^1$ with support in $S$ that has strictly larger objective value. Since there exists at least one feasible solution with positive objective value, we conclude that any maximizer of the objective must have support in $S$ and hence $\text{supp}(\hat P )\subset S.$\end{proof}

\section{Proof of Theorem \ref{theorem upper 2}}
\begin{proof}
We use Algorithm \ref{alg 2} to construct $\tilde z$ and $\hat z$. Note that Algorithm \ref{alg 2} has complexity bounded by $O(np)$ and hence is a polynomial-time algorithm. This is due to the fact that $\hat \theta$ can be computed explicitly by setting the $p-s$ smallest entries of $X^{(2)}\tilde z/n$ to zero. 

 \begin{algorithm}[h] \label{alg 2}
	\SetAlgoLined
 	\KwIn{Data matrix $X=[X_1, \dots, X_n] \in\mathr^{p\times n}$, sparsity level $s$, selection parameter $\kappa$}
 	\KwOut{Clustering label vector $ \hat  z\in \{-1,1\}^n$}
 	\nl Sample $$\tilde E_{ij} \overset{i.i.d.}{\thicksim} \mathcal{N}(0,1), ~~~~~~~~\check{E}_{ij}\overset{i.i.d.}{\thicksim} \mathcal{N}(0,2).$$ \\
 	\nl Compute $$ X^{(1)}=X-\tilde E-\check E, ~~~ X^{(2)}=X-\tilde E+\check E, ~~~~X^{(3)}=X+\tilde E$$. \\
 	\nl Estimate largest entry of $\theta$ by diagonal thresholding
 $$\hat k:= \argmax_{k \in [p]} (X^{(1)} (X^{(1)})^T)_{kk}. $$ \\ 
 	\nl Compute preliminary cluster estimator 
 	$$ \tilde z_i:= \text{sgn}( X^{(1)}_{i\hat k}), ~i=1, \dots, n$$ \\
 \nl Use second sample to estimate $\theta$ by considering only $s$ largest entries 
 $$
 \hat \theta=\argmin_{\theta:~\|\theta\|_0\leq s} \left  \| \frac{X^{(2)}\tilde z}{n} - \theta  \right \|.$$\\ \nl Use third sample to improve clustering and return for $i=1, \dots, n$
 	\begin{equation} \hat  z_i=\text{sgn}(\langle \hat \theta, X^{(3)}_i\rangle). \end{equation}
 	\caption{Sparse clustering with sample splitting}\label{alg:2}
\end{algorithm}

We first show that $X^{(1)}, X^{(2)}$ and $X^{(3)}$ are independent. Indeed, $X^{(1)}, X^{(2)}$ and $X^{(3)}$  are all Gaussian and hence independence is implied by zero covariance structure. We have that
\begin{align*}
    \text{Cov}(X^{(3)}, X^{(2)})=\text{Cov}(E+\tilde E, E-\tilde E+\check E)=0,
\end{align*}
and likewise for the other pairwise comparisons. 

Next, we show that $\hat k$ selects with high probability a coordinate such that $|\theta_{\hat k}| \geq \kappa/2$. We observe that
\begin{align*}
    \hat k=\argmax_{k\in [p]} \left [ \frac{(X^{(1)}(X^{(1)})^T)_{kk}}{4n}-1 \right ]. 
\end{align*}
We have that
\begin{align*}
\frac{(X^{(1)} (X^{(1)})^T)_{kk}}{{4}n}-1\overset{d}{=} \frac{\theta_k^2}{4} +  \frac{\theta_k (z^TE^T)_k}{{2}n}+ \left (\frac{EE^T}{n}-I \right )_{kk}. 
\end{align*}
Hence, using Bernstein's inequality to bound the last term above and a Gaussian tail bound to bound the term in the middle above, we obtain that for some constant $c>0$ with probability at least $1-p^{-2}$
\begin{align}
 \frac{\theta_k^2}{4} - c (1+|\theta_k|)\sqrt{\frac{\log(p)}{n}}   \leq  \frac{(X^{(1)} (X^{(1)})^T)_{kk}}{{4}n}-1 \leq \frac{\theta_k^2}{4}+ c (1+|\theta_k|)\sqrt{\frac{\log(p)}{n}} \label{bound thetaj}.
\end{align}
By a union bound \eqref{bound thetaj} occurs for all $k \in [p]$ simultaneously with probability at least $1-p^{-1}$. In particular, for $k \in [p]$ such that $|\theta_k| < \kappa/2$, we have on the corresponding event that
\begin{align*}
   \frac{(X^{(1)} (X^{(1)})^T)_{kk}}{4n}-1 \leq \frac{\kappa^2}{16}+ c \left (1+\frac{\kappa}{2} \right )\sqrt{\frac{\log(p)}{n}} < \kappa^2/8.
\end{align*}
Moreover, by assumption there exists at least one index $k$ such that $|\theta_k| \geq \kappa$ such that we have on the above event
\begin{align*}
   \frac{(X^{(1)}(X^{(1)})^T)_{kk}}{{4}n}-1  \geq \frac{\theta_k^2}{4} - c(1+|\theta_k| \sqrt{\frac{\log(p)}{n}}\geq \theta_k^2/8 \geq \kappa^2/8.
\end{align*}
Here we used in both cases that $\log(p)\leq n$ and that $\kappa$ is large enough. 
We conclude that $\hat k$ selects an index $k \in [p]$ such that with probability at least $1-p^{-1}$ we have that $|\theta_{\hat k}|\geq \kappa/2$.

Next, we show that on this event the preliminary cluster estimator $\tilde z$ clusters better than with a random guess. Denote $\varepsilon^{(1)}_i:=E_i-\tilde E_i-\check E_i \thicksim \mathcal{N}(0,4I_p)$. 
Setting $\pi=\text{sgn}(\theta_{\hat k}$ and using Markov's inequality we obtain that with probability at least $1-p^{-1}$
\begin{align}
 \notag    \ell(\tilde z, z ) &  = \min_{\pi \in \{-1,1\}} \frac{1}{n} \sum_{i=1}^n \mathbf{1} \left ( \pi \text{sgn} (z_i \theta_{ \hat k}+\varepsilon^{(1)}_{i \hat k}) \neq z_i) \right ) \leq \frac{1}{n} \sum_{i=1}^n \mathbf{1} \left ( | \theta_{ \hat k}| \leq | \varepsilon^{(1)}_{i \hat k}|  \right ) \\  & \leq \frac{1}{n} \sum_{i=1}^n \frac{(\varepsilon^{(1)}_{i \hat k})^2}{\theta_{ \hat k}^2} \leq \frac{4}{\kappa^2 n} \max_{k \in [p]} \sum_{i=1}^n \frac{(\epsilon_{ik}^{(1)})^2}{4}. \notag
\end{align}
For fixed $k \in [p]$  we have that $\sum_{i=1}^n {(\varepsilon_{ik}^{(1)})^2}/4$ is chi-square distributed with $n$ degrees of freedom. Hence, by Bernstein's inequality and then using a union bound we obtain that with probability at least $1-p^{-1}$
$$\max_{k \in [p]} \frac{1}{n}\sum_{i=1}^n {(\varepsilon_{ik}^{(1)})^2}-4 \lesssim \sqrt{\frac{\log(p)}{n}}. $$
Hence, using another union bound, we have with probability at least $1-2p^{-1}$
\begin{align}
    \label{bound tilde z}
    \ell(\tilde z, z ) \lesssim \frac{1}{\kappa^2} \left (1+ \sqrt{\frac{\log(p)}{n}} \right ) \lesssim \frac{1}{\kappa^2}. 
\end{align}

Next, we consider the second sample $X^{(2)}$, conditionally on $\tilde z$, and prove bounds for the estimation error of $\hat \theta$. We have that
\begin{align} \label{eq norm means}
\frac{X^{(2)} \tilde z}{n}= \theta \frac{\langle z, \tilde z \rangle}{n} + \frac{(E-\tilde E + \check E)\tilde z}{n}.
\end{align}
Since $\tilde z$ and $X^{(2)}$ are independent, $E-\tilde E + \check E$ and $\tilde z$ are independent, too, and we recognize \eqref{eq norm means} conditionally on $\tilde z$ as an instance of the $p$-dimensional normal means model with coordinatewise variance  $4/n$. 
Hence, by Corollary 2.8 in \cite{RigolletHuetter17} (with $\text{MSE}(\cdot)=\| \theta \langle \tilde z, z \rangle/n- \cdot \|^2/p$, $\mathbb{X}=I_p$, $k=s$, $\hat \theta^{LS}_{\mathcal{B}_0(k)}=\hat \theta$, $\sigma^2=4/n$ , $n=p$, $d=p$ and $\delta=s/p$ there) we have with probability at least $1-(s/p)$ that 
\begin{align*}
 \left    \| \theta \frac{\langle \tilde z, z \rangle}{n} -\hat \theta \right \|^2 \lesssim  \frac{s \log(ep/s)}{n}. 
\end{align*}
Using a union bound to incorporate the event where $\ell(\hat z, z) \lesssim \kappa^2$, we conclude that with probability at least $1-(3s)/p$
\begin{align}
   \min_{\pi \in \{-1,1\}} \|\hat \theta-\pi \theta\| &  \lesssim \sqrt{\frac{s \log(ep/s}{n}}+\|\theta\| \ell(\tilde  z,z) \notag \\ & \lesssim \sqrt{\frac{s \log(ep/s)}{n}} + \frac{\|\theta\|}{\kappa^2}. \ \label{bound hat theta 2}
\end{align}
We finally bound the misclustering error of $\hat z$. Note that since $\hat \theta$ only depends on $ X^{(1)}$ and $X^{(2)}$ that $\hat \theta$ and $ X^{(3)}$ are independent.
Similar as in the proof of Theorem \ref{thm mainsimple} we work on the event $\tilde \Omega:= \{ \min_{\pi \in \{-1,1\}} \|\hat \theta-\pi \theta\| \leq c (\sqrt{s \log(ep/s)/n}+\|\theta\|/\kappa^2) \}$, which by \eqref{bound hat theta 2} occurs with probability at least $1-(3s)/p$, consider each $z_i$ separately and use Markov's inequality to conclude. We assume again without loss of generality that $z_i = 1$ and pick $\pi$ as the minimizing $\pi$ in \eqref{bound hat theta 2} for all $i$, i.e. $\pi=\text{sgn}(\langle \hat \theta, \theta \rangle)$. Denote $ \varepsilon_i^{(3)} := E_i+\tilde E_i \thicksim \mathcal{N}(0, 2I_p)$. Hence, by independence, a Gaussian tailbound and the tower property of expectation we obtain that for some constant $c'>0$
\begin{align}
   \mathbb{E} \left [\mathbf{1} \left ( z_i \neq \pi \hat z_i \right ) \mathbf{1}_{\tilde \Omega} \right ]& = \mathbb{E} \left [ \mathbf{1}( \langle \pi \hat \theta, \theta +  \varepsilon_i^{(3)} \rangle \leq 0 ) \mathbf{1}_{\tilde \Omega} \right ] \leq \mathbb{E} \left [\exp \left (- \frac{\langle \pi \hat \theta, \theta \rangle^2}{4 \|\hat \theta\|^2}\right ) \mathbf{1}_{\tilde \Omega} \right ]\notag \\
   &  \leq \mathbb{E} \left [\exp \left (  - \frac{ \|\theta\|^2 ( \|\theta\|^2-\|\pi \hat \theta-\theta\|^2)}{4(\|\theta\|+\|\pi \hat \theta-\theta\|)^2} \right ) \mathbf{1}_{\tilde \Omega} \right ] \notag \\ & \leq \mathbb{E} \left [\exp \left (-\frac{\|\theta\|^2}{4} \left (1-2\frac{\|\pi \hat \theta-\theta\|}{\|\theta\|} \right ) \right )\mathbf{1}_{\tilde \Omega}  \right ] \notag \\
   & \leq \exp \left ( - \frac{\|\theta\|^2}{4} \left (1- c' \left ( \sqrt{\frac{s \log(ep/s)}{n\|\theta\|^2}} + \frac{1}{\kappa^2} \right ) \right ) \right ) . \notag 
\end{align}
Finally, applying Markov's inequality and a union bound as in the proof of Theorem \ref{thm mainsimple}, we obtain the desired result.
\end{proof}

\section*{Acknowledgements.} The authors would like to thank two anonymous referees for their helpful comments and remarks, which lead to an improved version of the manuscript. Moreover, M. L\"offler would like to thank Anderson Ye Zhang and Sara van de Geer for helpful discussions and Martin Wahl for careful proofreading of the manuscript.

{
\bibliographystyle{alpha}
\bibliography{spectral}}

\newcommand{\etalchar}[1]{$^{#1}$}
\begin{thebibliography}{LDBB{\etalchar{+}}16}

\bibitem[AFW20]{AbbeFanWang20}
E.~Abbe, J.~Fan, and K.~Wang.
\newblock {An $\ell_p$ theory of PCA and spectral clustering}.
\newblock {\em arXiv preprint}, 2020.

\bibitem[ASLW15]{AzizyanSinghWasserman15}
M.~Azizyan, A.~Singh, and L.~L.~Wasserman.
\newblock Efficient sparse clustering of high-dimensional non-spherical
  gaussian mixtures.
\newblock In {\em Proceedings of the Eighteenth International Conference on
  Artificial Intelligence and Statistics}, volume~38, pages 37--45. PMLR, 2015.

\bibitem[ASW13]{AzizyanSinghWasserman13}
M.~Azizyan, A.~Singh, and L.~Wasserman.
\newblock Minimax theory for high-dimensional gaussian mixtures with sparse
  mean separation.
\newblock {\em Advances in Neural Information Processing Systems (NIPS)}, pages
  2139--2147, 2013.

\bibitem[AW09]{AminiWainwright09}
A.A. Amini and M.J. Wainwright.
\newblock High-dimensional analysis of semidefinite relaxations for sparse
  principal components.
\newblock {\em Ann. Statist.}, 37:2877--2921, 2009.

\bibitem[BB19a]{BrennanBresler19}
M.~Brennan and G.~Bresler.
\newblock {Average-Case Lower Bounds for Learning Sparse Mixtures, Robust
  Estimation and Semirandom Adversaries}.
\newblock {\em arXiv preprint}, 2019.

\bibitem[BB19b]{BrennanBresler19b}
M.~Brennan and G.~Bresler.
\newblock {Optimal Average-Case Reductions to Sparse {PCA}: From Weak
  Assumptions to Strong Hardness}.
\newblock {\em Conference on Learning Theory (COLT)}, 2019.

\bibitem[BB20]{BrennanBresler20}
M.~Brennan and G.~Bresler.
\newblock Reducibility and statistical-computational gaps from secret leakage.
\newblock {\em Conference on Learning Theory (COLT}, 2020.

\bibitem[BBH18]{BBH}
M.~Brennan, G.~Bresler, and W.~Huleihel.
\newblock Reducibility and computational lower bounds for problems with planted
  sparse structure.
\newblock {\em arXiv preprint}, 2018.

\bibitem[BBP05]{BaikBenArousPeche05}
J.~Baik, G.~{Ben Arous}, and S.~P\'ech\'e.
\newblock Phase transition of the largest eigenvalue for non-null complex
  sample covariance matrices.
\newblock {\em Ann. Probab.}, 33(5):1634--1697, 2005.

\bibitem[BBS14]{BouveyronBrunetSaumard14}
C.~Bouveyron and C.~Brunet-Saumard.
\newblock Model-based clustering of high-dimensional data: A review.
\newblock {\em Comput. Statist. Data Anal.}, 71:52--78, 2014.

\bibitem[BHK{\etalchar{+}}19]{pcal}
B.~Barak, S.~Hopkins, J.~Kelner, P.K. Kothari, A.~Moitra, and A.~Potechin.
\newblock A nearly tight sum-of-squares lower bound for the planted clique
  problem.
\newblock {\em SIAM J. Comput.}, 48(2):687--735, 2019.

\bibitem[BKR{\etalchar{+}}11]{BalakrishnanMolarRinaldoSinghWasserman11}
S.~Balakrishnan, M.~Kolar, A.~Rinaldo, A.~Singh, and L.~Wasserman.
\newblock Statistical and computational tradeoffs in biclustering.
\newblock In {\em NIPS 2011 Workshop on Computational Trade-offs in Statistical
  Learning}, 2011.

\bibitem[BLT18]{BellecLecueTsybakov18}
P.C. Bellec, G.~Lecu\'e, and A.B. Tsybakov.
\newblock {S}lope meets {L}asso: {I}mproved oracle bounds and optimality.
\newblock {\em Ann. Statist.}, 46(6B):3603--3642, 2018.

\bibitem[BR13a]{BerthetRigollet13a}
Q.~Berthet and P.~Rigollet.
\newblock Complexity theoretic lower bounds for sparse principal component
  detection.
\newblock {\em Conference on Learning Theory}, pages 1046--1066, 2013.

\bibitem[BR13b]{BerthetRigollet13b}
Q.~Berthet and P.~Rigollet.
\newblock Optimal detection of sparse principal components in high dimension.
\newblock {\em Ann. Statist.}, 41(4):1780--1815, 2013.

\bibitem[BWY17]{BalakrishnanWainwrightYu17}
S.~Balakrishnan, M.J. Wainwright, and B.~Yu.
\newblock {Statistical guarantees for the {EM} algorithm: From population to
  sample-based analysis}.
\newblock {\em Ann. Statist.}, 45(1):77--120, 2017.

\bibitem[CLR17]{CaiLiangRakhlin17}
T.T. Cai, T.~Liang, and A.~Rakhlin.
\newblock {Computational and statistical boundaries for submatrix localization
  in a large noisy matrix}.
\newblock {\em Ann. Statist.}, 45(4):1403--1430, 2017.

\bibitem[CMZ19]{CaiMaZhang19}
T.T. Cai, J.~Ma, and L.~Zhang.
\newblock {CHIME}: Clustering of high-dimensional gaussian mixtures with {EM}
  algorithm and its optimality.
\newblock {\em Ann. Statist.}, 47(3):1234--1267, 2019.

\bibitem[dEJL07]{dAspremontElGhaouiJordanLanckriet07}
A.~d'Aspremont, L.~{El Ghaoui}, M.I. Jordan, and G.R.G. Lanckriet.
\newblock {A direct formulation of sparse {PCA} using semidefinite
  programming}.
\newblock {\em SIAM Review}, 49(3):434--448, 2007.

\bibitem[DKS17]{DiakonikolasKaneStewart17}
I.~Diakonikolas, D.M. Kane, and A.~Stewart.
\newblock Statistical query lower bounds for robust estimation of
  high-dimensional gaussians and gaussian mixtures.
\newblock In {\em 2017 IEEE 58th Annual Symposium on Foundations of Computer
  Science (FOCS)}, pages 73--84. IEEE, 2017.

\bibitem[DKWB19]{DingKuniskyWeinBandeira19}
Y.~Ding, D.~Kunisky, A.S. Wein, and A.S. Bandeira.
\newblock {Subexponential-Time Algorithms for Sparse {PCA}}.
\newblock {\em arXiv preprint}, 2019.

\bibitem[DLR77]{DempsterLairdRubin77}
A.~Dempster, N.~Laird, and D.~Rubin.
\newblock Maximum likelihood from incomplete data via the {EM} algorithm (with
  discussion).
\newblock {\em J. R. Statist. Soc. B}, 39:1--38, 1977.

\bibitem[DM16]{DesphandeMontanari16}
Y.~Deshpande and A.~Montanari.
\newblock Sparse {PCA} via covariance thresholding.
\newblock {\em J Mach Learn Res}, 17(1):1--41, 2016.

\bibitem[FGR{\etalchar{+}}17]{sq-clique}
V.~Feldman, E.~Grigorescu, L.~Reyzin, S.~Vempala, and Y.~Xiao.
\newblock Statistical algorithms and a lower bound for detecting planted
  cliques.
\newblock {\em J. ACM}, 64(2):1--37, 2017.

\bibitem[FLWY18]{FanLiuWangYang18}
J.~Fan, H.~Liu, Z.~Wang, and Z.~Yang.
\newblock Curse of heterogeneity: Computational barriers in sparse mixture
  models and phase retrieval.
\newblock {\em arXiv preprint}, 2018.

\bibitem[FM04]{FriedmanMeulman04}
J.~Friedman and J.~Meulman.
\newblock Clustering objects on subsets of attributes.
\newblock {\em J. Roy. Statist. Soc. Ser. B}, 66:815--849, 2004.

\bibitem[GMZ17]{GaoMaZhou17}
C.~Gao, Z.~Ma, and H.H. Zhou.
\newblock {Sparse CCA: Adaptive Estimation and Computational Barriers}.
\newblock {\em Ann. Statist.}, 45(5):2074--2101, 2017.

\bibitem[GN16]{GineNickl16}
E.~Gin\'e and R.~Nickl.
\newblock {\em {Mathematical Foundations of Infinite-Dimensional Statistical
  Methods}}.
\newblock Cambridge University Press, 2016.

\bibitem[GV19]{GiraudVerzelen19}
C.~Giraud and N.~Verzelen.
\newblock Partial recovery bounds for clustering with the relaxed k-means.
\newblock {\em Mathematical Statistics and Learning}, 1(3/4):317--374, 2019.

\bibitem[GWS20]{GataricWangSamworth20}
M.~Gataric, T.~Wang, and R.J. Samworth.
\newblock Sparse principal component analysis via axis-aligned random
  projections.
\newblock {\em J. R. Stat. Soc. B}, 82(2):329--359, 2020.

\bibitem[HKP{\etalchar{+}}17]{HopkinsKothariPotechinRaghavendraSchrammSteurer17}
S.B. Hopkins, P.K Kothari, A.~Potechin, P.~Raghavendra, T.~Schramm, and
  D.~Steurer.
\newblock {The power of sum-of-squares for detecting hidden structures}.
\newblock In {\em 2017 IEEE 58th Annual Symposium on Foundations of Computer
  Science}. IEEE, 2017.

\bibitem[Hoe63]{Hoeffding63}
W.~Hoeffding.
\newblock {Probability Inequalities for Sums of Bounded Random Variables}.
\newblock {\em J. Amer. Statist. Assoc.}, 58(301):13--30, 1963.

\bibitem[Hop18]{Hopkins18}
S.B. Hopkins.
\newblock {\em {Statistical Inference and the Sum of Squares Method}}.
\newblock PhD thesis, Cornell University, 7 2018.

\bibitem[HP15]{HardtPrice15}
M.~Hardt and E.~Price.
\newblock {Tight bounds for learning a mixture of two Gaussians}.
\newblock In {\em STOC'15—Proceedings of the 2015 ACM Symposium on Theory of
  Computing}, pages 753--760. ACM, New York, 2015.

\bibitem[HS17]{HopkinsSteurer17}
S.B. Hopkins and D.~Steurer.
\newblock Efficient bayesian estimation from few samples: community detection
  and related problems.
\newblock In {\em 58th Annual IEEE Symposium on Foundations of Computer
  Science}, volume~1, pages 379--390. IEEE, 2017.

\bibitem[HSV20]{anytime-pca}
G.~Holtzman, A.~Soffer, and D.~Vilenchik.
\newblock A greedy anytime algorithm for sparse {PCA}.
\newblock {\em Proceedings of Thirty Third Conference on Learning Theory,
  PMLR}, 125:1939--1956, 2020.

\bibitem[HW20]{lowdeg-counter}
J.~Holmgren and A.S. Wein.
\newblock Counterexamples to the low-degree conjecture.
\newblock {\em arXiv preprint arXiv:2004.08454}, 2020.

\bibitem[JKW17]{JinKeWang17}
J.~Jin, Z.T. Ke, and W.~Wang.
\newblock Phase transitions for high dimensional clustering and related
  problems.
\newblock {\em Ann. Statist.}, 45(5):2151--2189, 2017.

\bibitem[JL07]{JohnstoneLu07}
I.M. Johnstone and A.Y. Lu.
\newblock {On Consistency and Sparsity for Principal Components Analysis in
  High Dimensions}.
\newblock {\em J Am Stat Assoc}, 104(486):682--693, 2007.

\bibitem[JW16]{JinWang16}
J.~Jin and W.~Wang.
\newblock {Influential features PCA for high-dimensional clustering}.
\newblock {\em Ann. Statist.}, 44(6):2323--2359, 2016.

\bibitem[Kea98]{Kearns98}
M.~Kearns.
\newblock Efficient noise-tolerant learning from statistical queries.
\newblock {\em J. ACM}, 45(6):983--1006, 1998.

\bibitem[KNV15]{KrauthgamerNadlerVilenchik15}
R.~Krauthgamer, B.~Nadler, and D.~Vilenchik.
\newblock {Do semidefinite relaxations solve sparse {PCA} up to the information
  limit?}
\newblock {\em Ann. Statist.}, 43(3):1300--1322, 2015.

\bibitem[KWB19]{KuniskyWeinBandeira19}
D.~Kunisky, A.S. Wein, and A.S. Bandeira.
\newblock {Notes on Computational Hardness of Hypothesis Testing: Predictions
  using the Low-Degree Likelihood Ratio}.
\newblock {\em arXiv preprint}, 2019.

\bibitem[LDBB{\etalchar{+}}16]{LesieurDeBaccoBanksKrzakalaMooreZdeborova16}
T.~Lesieur, C.~De~Bacco, J.~Banks, F.~Krzakala, C.~Moore, and L.~Zdeborov\'a.
\newblock {Phase transitions and optimal algorithms in high-dimensional
  Gaussian mixture clustering}.
\newblock {\em 2016 54th Annual Allerton Conference on Communication, Control,
  and Computing (Allerton)}, pages 601--608, 2016.

\bibitem[Llo82]{Lloyd82}
S.~Lloyd.
\newblock Least squares quantization in {PCM}.
\newblock {\em IEEE Trans. Inf. Theor.}, 28(2):129--137, 1982.

\bibitem[LV13]{VuLei13}
J.~Lei and V.Q. Vu.
\newblock Minimax sparse principal subspace estimation in high dimensions.
\newblock {\em Ann. Statist.}, 41(6):2905--2947, 2013.

\bibitem[LV15]{LeiVu15}
J.~Lei and V.Q. Vu.
\newblock Sparsistency and agnostic inference in sparse {PCA}.
\newblock {\em Ann. Statist.}, 43(1):299--322, 2015.

\bibitem[LZ16]{LuZhou16}
Y.~Lu and H.H. Zhou.
\newblock {Statistical and Computational Guarantees of Lloyd's Algorithm and
  its Variants}.
\newblock {\em {arXiv preprint}}, 2016.

\bibitem[Ma13]{Ma13}
Z.~Ma.
\newblock Sparse principal component analysis and iterative thresholding.
\newblock {\em Ann. Statist.}, 41(2):772--801, 2013.

\bibitem[MW15a]{sos-sparse-pca}
T.~Ma and A.~Wigderson.
\newblock Sum-of-squares lower bounds for sparse {PCA}.
\newblock In {\em Advances in Neural Information Processing Systems (NIPS)},
  pages 1612--1620, 2015.

\bibitem[MW15b]{MaWu15}
Z.~Ma and Y.~Wu.
\newblock Computational barriers in minimax submatrix detection.
\newblock {\em Ann. Statist.}, 43(3):1089--1116, 2015.

\bibitem[Nda19]{Ndaoud19}
M.~Ndaoud.
\newblock Sharp optimal recovery in the two component gaussian mixture model.
\newblock {\em arXiv preprint}, 2019.

\bibitem[NWR19]{NilesWeedRigollet19}
J.~Niles-Weed and P.~Rigollet.
\newblock Estimation of wasserstein distances in the spiked transport model.
\newblock {\em arXiv preprint}, 2019.

\bibitem[PS07]{PanShen07}
W.~Pan and X.~Shen.
\newblock Penalized model-based clustering with application to variable
  selection.
\newblock {\em J. Mach. Learn. Res.}, 8:1145--1164, 2007.

\bibitem[PW07]{PengWei07}
J.~Peng and Y.~Wei.
\newblock Approximating k-means-type clustering via semidefinite programming.
\newblock {\em SIAM J. on Optimization}, 18(1):186--205, 2007.

\bibitem[RH17]{RigolletHuetter17}
P.~Rigollet and J.~H\"utter.
\newblock Lecture notes on high-dimensional statistics.
\newblock {\em Available at
  \url{http://www-math.mit.edu/~rigollet/PDFs/RigNotes17.pdf}}, 2017.

\bibitem[VAC17]{VerzelenAriasCastro17}
N.~Verzelen and E.~Arias-Castro.
\newblock Detection and feature selection in sparse mixture models.
\newblock {\em Ann. Statist.}, 45(5):1920--1950, 2017.

\bibitem[VCLR13]{VuChoLeiRohe13}
V.Q. Vu, J.~Cho, J.~Lei, and K.~Rohe.
\newblock {Fantope Projection and Selection: A near-optimal convex relaxation
  of Sparse {PCA}}.
\newblock {\em Advances in Neural Information Processing Systems (NIPS)}, 26,
  2013.

\bibitem[VL07]{von2007tutorial}
U.~Von~Luxburg.
\newblock A tutorial on spectral clustering.
\newblock {\em Stat. Comput.}, 17(4):395--416, 2007.

\bibitem[VW04]{VempalaWang04}
S.~Vempala and G.~Wang.
\newblock A spectral algorithm for learning mixture models.
\newblock {\em J. Comput. Syst. Sci.}, 68(4):841--860, 2004.

\bibitem[WBS16]{WangBerthetSamworth16}
T.~Wang, Q.~Berthet, and R.J. Samworth.
\newblock {Statistical and computational trade-offs in estimation of sparse
  principal components}.
\newblock {\em Ann. Statist.}, 44(5):1896--1930, 2016.

\bibitem[WT10]{WittenTibshirani10}
D.M. Witten and R.~Tibshirani.
\newblock A framework for feature selection in clustering.
\newblock {\em J. Am. Stat. Assoc.}, 105(490):713--726, 2010.

\bibitem[WZ08]{WangZhu08}
S.~Wang and J.~Zhu.
\newblock Variable selection for model-based high-dimensional clustering and
  its application to microarray data.
\newblock {\em Biometrics}, 64:440--448, 2008.

\bibitem[WZ19]{WuZhou19}
Y.~Wu and H.H. Zhou.
\newblock {Randomly initialized EM algorithm for two-component Gaussian mixture
  achieves near optimality in $O(\sqrt{n})$ iterations}.
\newblock {\em arXiv preprint}, 2019.

\end{thebibliography}

\end{document}